\documentclass[12pt]{article}
\usepackage{amsmath}

\usepackage{mathrsfs}
\usepackage{amscd}
\usepackage{amscd, amssymb, amsmath, amsthm, graphics}
\usepackage{amsmath,amsfonts,amsthm,amssymb}
\usepackage{latexsym,amsmath}
\usepackage{graphicx,psfrag}
\usepackage[all]{xy}
\usepackage{latexsym}
\usepackage{enumerate}
\usepackage{verbatim}
\usepackage{pb-diagram}

\setlength{\oddsidemargin}{0cm} \setlength{\evensidemargin}{0in}
\setlength{\textwidth}{6in} \setlength{\textheight}{23cm}
\setlength{\headheight}{0cm} \setlength{\topmargin}{-1cm}
\setlength{\parskip}{2mm}

\newtheorem{theorem}{Theorem}[section]
\newtheorem{lemma}[theorem]{Lemma}
\newtheorem{prop}[theorem]{Proposition}
\newtheorem{cor}[theorem]{Corollary}

\newtheorem{conj}[theorem]{Conjecture}

\theoremstyle{definition}
\newtheorem{definition}[theorem]{Definition}

\theoremstyle{remark}
\newtheorem{remark}[theorem]{Remark}

\newtheorem*{claim}{Claim}

\numberwithin{equation}{section}

\newcommand{\CC}{\mathbb{C}}
\newcommand{\RR}{\mathbb{R}}
\newcommand{\QQ}{\mathbb{Q}}
\newcommand{\ZZ}{\mathbb{Z}}

\newcommand{\JJ}{\mathcal{J}}
\newcommand{\CCC}{\mathcal{C}}
\newcommand{\EE}{\mathcal{E}}
\newcommand{\KK}{\mathcal{K}}
\newcommand{\LL}{\mathcal{L}}
\newcommand{\fF}{\mathcal{F}}

\newcommand{\conjone}{\begin{conj}}
\newcommand{\conjtwo}{\end{conj}}
\newcommand{\thmone}{\begin{theorem}}
\newcommand{\thmtwo}{\end{theorem}}
\newcommand{\lemmaone}{\begin{lemma}}
\newcommand{\lemmatwo}{\end{lemma}}
\newcommand{\pfone}{\begin{proof}}
\newcommand{\pftwo}{\end{proof}}
\newcommand{\defone}{\begin{definition}}
\newcommand{\deftwo}{\end{definition}}
\newcommand{\corone}{\begin{cor}}
\newcommand{\cortwo}{\end{cor}}
\newcommand{\cone}{\begin{claim}}
\newcommand{\ctwo}{\end{claim}}
\newcommand{\propone}{\begin{prop}}
\newcommand{\proptwo}{\end{prop}}
\newcommand{\eqone}{\begin{equation}}
\newcommand{\eqtwo}{\end{equation}}
\newcommand{\rmkone}{\begin{remark}}
\newcommand{\rmktwo}{\end{remark}}
\newcommand{\enone}{\begin{enumerate}}
\newcommand{\entwo}{\end{enumerate}}
\newcommand{\itone}{\begin{itemize}}
\newcommand{\ittwo}{\end{itemize}}

\newcommand{\onehalf}{\left(\begin{array}{cc}}
\newcommand{\theother}{\end{array}\right)}

\newcommand{\gijk}{\Gamma_{ijk}}
\newcommand{\gone}{\Gamma_{123}}

\newcommand{\oneeq}{\begin{equation}}
\newcommand{\twoeq}{\end{equation}}
\newcommand{\nono}{\noindent}

\begin{document}

\title{Lagrangian spheres, symplectic surfaces and the symplectic mapping class group}         
\author{Tian-Jun Li, Weiwei Wu}

\maketitle

\abstract{\nono Given a Lagrangian sphere   in a symplectic
$4$-manifold $(M, \omega)$ with $b^+=1$, we find  embedded
symplectic surfaces intersecting it minimally. When the Kodaira
dimension $\kappa$ of $(M, \omega)$ is $-\infty$, this minimal intersection property turns out to be
very powerful for both the uniqueness and existence problems of Lagrangian
spheres. On the uniqueness side, for a symplectic rational manifold and any class which is not
characteristic and ternary,
we show that homologous Lagrangian spheres are smoothly isotopic, and when
the  Euler number is less than 8,
we generalize Hind and Evans' Hamiltonian uniqueness  in the monotone case.
 On the existence side, when $\kappa=-\infty$, we give a characterization of
 classes represented by Lagrangian spheres, which  enables us to
   describe the non-Torelli part of the symplectic mapping class group.}

\tableofcontents
\section{Introduction}

For a symplectic 4-manifold $(M, \omega)$, symplectic surfaces and Lagrangian surfaces are  of complementary dimensions.
Thus we can ask what can be said about their intersection pattern.
 Welschinger  investigated this problem for a Lagrangian
torus $L$ in \cite{Wel}, where he proves that
 the class $[L]$  pairs trivially with any
effective  class,
and  a symplectic sphere
 with positive Chern number can be isotoped symplectically away from $L$.

  In the case when $L$ is  a Lagrangian sphere in $S^2\times S^2$ with a product symplectic form, Hind \cite{Hind} constructed
  two transverse foliations of symplectic spheres where each sphere intersects $L$ in a single point.
  This is used to show that every such $L$ is Hamiltonian isotopic to the antidiagonal.
  For a Lagrangian  sphere $L$ in   a  symplectic Del
Pezzo surface with Euler number at most $7$,  Evans showed in \cite{Evans}
that it can be displaced from certain symplectic spheres with
positive Chern number up to
 Hamiltonian isotopy, and applied this displacement result to prove the uniqueness of  Hamiltonian isotopy class of Lagrangian spheres.

In section \ref{minimal-intersection}, we generalize Evans' displacement result in two ways, the first being

\thmone\label{main} Let $L$ be a Lagrangian sphere in a
 symplectic 4-manifold $(M,\omega)$, and
$A\in H_2(M;\ZZ)$ with $A^2\geq -1$.  Suppose $A$ is represented by a  symplectic sphere $C$.
Then $C$ can be isotoped symplectically to another representative of $A$ which intersects $L$ minimally.
 \thmtwo

In this paper all surfaces are smooth, embedded, connected, and oriented.
 We say that two closed surfaces intersect \textit{minimally}
if they intersect transversely at $|k|$ points where $k$ is the
homological intersection number.

The second generalization is  for symplectic surfaces of arbitrary
genus in manifolds with $b^+=1$.
 To state it let  $\EE_\omega$  be  the
set of $\omega-$exceptional classes:
$$\{E\in H_2(M, \ZZ):E \text{ is represented by an $\omega$-symplectic } (-1) \text{ sphere}\}.$$

\thmone\label{general} Suppose $(M,\omega)$ is a symplectic
4-manifold with $b^+=1$ and  $L$ is a Lagrangian
sphere.
 Assume $A\in H_2(M, \ZZ)$ satisfies $\omega(A)>0, A^2>0$ and  $A\cdot E\geq 0$
 for all $E\in\EE_\omega$.  Then there exists a symplectic
surface in the class $nA$ intersecting $L$ minimally for large
$n\in\mathbb{N}$.\thmtwo




One consequence of
Theorem \ref{general} is that we are able to effectively perform  the Lagrangian-relative
inflation procedure when $b^+=1$ (Section \ref{section:Lagrangian spherical}).

This turns out useful in dealing with a variety of questions, especially  the existence of Lagrangian spheres.
To approach this question, it is convenient to introduce the following definition.

  \defone\label{def:K-Lag spherical}
A class $\xi$ is called
\textit{$K_{\omega}$-null spherical} if $\xi^2=-2, K_{\omega}(\xi)=0$ and it is represented by a
smooth sphere.  Here $K_{\omega}$ is the symplectic canonical class.
\deftwo

We classify $K$-null spherical classes in any $(M,\omega)$ with $\kappa=-\infty$.
Recall that $\kappa(M, \omega)$ is the \textit{Kodaira dimension} of $(M, \omega)$ (see for example \cite{Kod}).
$\kappa$ takes values in the set $\{-\infty, 0, 1, 2\}$, and
$\kappa(M, \omega)=-\infty$ exactly when $(M, \omega)$ is symplectic rational or ruled.
The classification of $K_{\omega}$-null spherical classes, together with  the Lagrangian-relative inflation,
 enables us  to further show that the obvious necessary
condition for the existence of a Lagrangian sphere  in $(M, \omega)$ is also sufficient.

\thmone\label{Lagrangian sphere class classification} Let
$(M,\omega)$ be a symplectic 4-manifold with $\kappa=-\infty$.  $\xi\in H_2(M;\ZZ)$ is represented by a Lagrangian
sphere if and only if $\xi$ is $K_{\omega}$-null spherical and
$\omega(\xi)=0$.
\thmtwo

On the other hand, as in \cite{Evans}, Theorem \ref{main} is useful in establishing uniqueness results for
\textit{rational manifolds}.  A rational
manifold is  $\CC P^2\# k\overline{\CC P}^2$ or $S^2\times S^2$.  When $M$ is a rational manifold $(M,\omega)$
 is called a \textit{symplectic rational manifold}.    A
symplectic rational manifold $(M,\omega)$ which is monotone, i.e.
$[\omega]=K_{\omega}$, is also called a \textit{symplectic Del Pezzo
surface}.

\thmone\label{Lagrangian isotopy uniqueness}
Let $(M,\omega)$ be a  symplectic rational manifold  with Euler number $\chi\leq7$, and $\xi$  a $K_{\omega}$-null spherical class with $\omega(\xi)=0$.
If $\xi$ is not characteristic when $\chi=6$, then
 Lagrangian spheres in $\xi$ are unique up to
 Hamiltonian isotopy.

\thmtwo

This was due to Hind (\cite{Hind}) in the case of $S^2\times S^2$, and to
Evans  (\cite{Evans}) for  symplectic Del Pezzo surfaces with Euler
number up to $7$.  Notice that this is equivalent to the transitivity of the Hamiltonian
group action on the space of homologous
Lagrangian spheres.
The proof of Theorem \ref{Lagrangian isotopy
uniqueness} will be presented in Section \ref{section: Lagrangian uniqueness}. We believe that the uniqueness still holds when $\chi=6$ and $\xi$ is characteristic.
However, the condition $\chi\leq7$ in Theorem \ref{Lagrangian isotopy uniqueness} is necessary, demonstrated by Seidel's
 twisted Lagrangian spheres in symplectic Del Pezzo surfaces with  $\chi\geq 8$ (\cite{Seidel notes}).

 Further, we prove:
 \thmone\label{smooth isotopy uniqueness}
Let $(M,\omega)$ be a  symplectic rational manifold, and $\xi$  a $K_{\omega}$-null spherical class with $\omega(\xi)=0$.
If $\xi$ is not characteristic when $\chi=6$, then
 Lagrangian spheres in $\xi$ are unique up to
 smooth isotopy.

\thmtwo
 In the monotone case this was again due to Evans  (\cite{EvansT}).
 We expect the extra condition being non-characteristic when $\chi=6$ will eventually be removed.
 In fact, we are not aware of examples of  homologous but not smoothly isotopic Lagrangian spheres in any symplectic 4-manifolds. For Lagrangian tori,  such examples in a primitive homology class  were first constructed by Vidussi in \cite{Vi}, and null-homologous ones were further constructed 
by Fintushel and Stern in \cite{FS}.

We also conjecture the following version of uniqueness.

 \conjone\label{k-blowups} For any two homologous Lagrangian spheres
$L_1$ and $L_2$ in a symplectic rational manifold $(M,\omega)$, there
exists $\phi\in Symp_h(M,\omega)$ such that $\phi(L_1)=L_2$.
\conjtwo

 In other words,  the Torelli part $Symp_h(M,
\omega)$, which is the subgroup of $Symp(M, \omega)$ acting trivially on
homology,  should also act transitively on the space of Lagrangian spheres in a
fixed homology class. Evans \cite{EvansS} calculated explicitly the
homotopy type of $Symp_h(M,\omega)$ when $(M, \omega)$ is a
symplectic Del Pezzo surface with $\chi\leq 8$ (also known to
M.Pinnsonault). In particular,  when $\chi\leq 7$, it is
connected thus agreeing with  $Ham(M,\omega)$.  In our upcoming work
\cite{LW} we will
 extend the connectedness to the non-monotone case.

It turns out that we are able to  calculate the non-Torelli part of the symplectic mapping class group from  Theorem \ref{Lagrangian sphere class classification}.
Recall that each Lagrangian sphere $L$ gives rise to a symplectomorphism, well defined up to isotopy (see \cite{Seidel notes}  and 2.1.1), which
is denoted by $\tau_L$ and called the Lagrangian Dehn twist along $L$.

\thmone\label{homological action theorem} Let $(M,\omega)$ be a
symplectic 4-manifold with $\kappa=-\infty$. Then the homological action
of $Symp(M, \omega)$ is generated by Lagrangian Dehn
twists. In other words, for any  $f\in Symp(M, \omega)$, there are Lagrangian spheres $L_i$ such that
$f_*=(\tau_{L_1})_*\circ(\tau_{L_2})_*\circ\cdots\circ(\tau_{L_r})_*$.
 \thmtwo

In the homological level, Theorem \ref{homological action theorem}
 could  be viewed as a symplectic version of a classical theorem
of M. Noether, which asserts that a birational automorphism of $\CC
P^2$ (also known as \textit{plane Cremona map}) can be decomposed
into a series of \textit{ordinary quadratic transformations} (see
\cite{AC} for a complete account).\\

\nono\textit{Acknowledgement:}  The authors would like to thank
Richard Hind for his interest in our work and innumerable inspiring
 comments, as well as pointing out an error in an earlier draft.
 We would also like to thank Robert Gompf, Jonathan Evans, Chris Wendl,
 Ke Zhu, Weiyi Zhang and Chung-I Ho for helpful conversations.
After the paper was completed, we received a manuscript by
V.V.Shevchishin \cite{Shev}, where he also proved Theorems 1.4 and 1.8
using a different approach.

\section{SFT of Lagrangian $S^2$}\label{sft}

\subsection{Geometry of $T^*S^2$}

We first recall some standard facts of $T^*S^2$. Consider the embedding of the unit sphere in $\RR^3$,
which induces a symplectic embedding of $T^*S^2$ into $T^*\RR^3=\RR^3\times\RR^3$.
 In terms of the coordinates
$(u, v)\in\RR^3\times\RR^3$, $T^*S^2$ is thus given by equations (\cite{Seidel notes}, \cite{Evans}):
\begin{equation}\label{coordinate}
\{(u,v)\in\RR^3\times \RR^3:|u|=1, u\cdot v=0)\},
\end{equation}
and  the symplectic form is the restriction of $\omega_{can}=d\lambda_{can}=\sum dv_jdu_j$ on $\RR^6$,
where  the Liouville form $\lambda_{can}=\sum v_j du_j$ is also well-defined.
\eqref{coordinate}  provides a Lagrangian splitting of the tangent bundle of $T^*S^2$
into the horizontal $u-$direction and the vertical $v-$direction.

Here is another useful model. Consider the affine quadric $Q=\{z_1^2+z_2^2+z_3^2=1\}\subset \CC^3$.
In terms of $u=\hbox{Re }z \in \RR^3$ and $v=\hbox{Im }z\in \RR^3$, $Q$ is described by $|u|^2-|v|^2=1,
u\cdot v=0$. Therefore $(u, v)\to (-\frac{u}{|u|}, v|u|)$ is a diffeomorphism from $Q$ to $T^*S^2$.
Moreover, if we restrict $\omega_{can}$ on $\RR^6$ to $Q$, the diffeomorphism is in fact a symplectomorphism.

\subsubsection{Symplectomorphisms  of $T^*S^2$}

The symplectomorphism  group of $T^*S^2$ contains some compact subgroups.
For each $l>0$, denote
$T^*_l S^2$ to be the open
disk bundle  with  $|v|<l$, and $H_l$ the sphere bundle of length $l$.
The isometry group of $S^2$, $SO(3)$, acts on $(T^*S^2, \omega_{can})$ as symplectomorphisms preserving each $H_l$.

The Hamiltonian function $Z(u, v)={1\over 2} |v|^2$ generates a circle action on $T^*S^2$, agreeing with the cogeodesic flow.
If we apply the symplectic cut operation in \cite{Lerman} to  $\overline{T_{l}^*S^2}$ along $H_l$, we obtain $S^2\times S^2$  with a monotone symplectic form  (see for example \cite{Audin}).
In other words,
$T_1^*S^2$ embeds into a monotone $S^2\times S^2$  as the complement of the diagonal
$\Delta$.

 The mapping class group of the
compactly supported symplectomorphism group of $(T^*S^2, \omega_{can})$ is non-trivial. In fact, it is the infinite cyclic group generated by a model Dehn twist of the zero section (\cite{Seidel}).

To define the model  Dehn twist, consider the  Hamiltonian function $T(u, v)=|v|$ on  $T^*S^2\backslash \{\text{zero section}\}$, whose Hamiltonian vector field is
the unit field $(v/|v|, 0)$. The induced circle action is
$$
\sigma_t(u,v)=(  \cos(t)u+\sin(t){v\over |v|}, \cos(t)v-\sin(t)|v|u).
$$
Notice that $\sigma_{\pi}$ is the antipodal map $A(u,v)=(-u, -v)$, which extends smoothly over the zero section.
Now choose a function $\rho:\mathbb R\to \mathbb R$ satisfying $\rho(t)=0$ for $t\gg 0$ and $\rho(-t)=\rho(t)-t$.
The Hamiltonian flow of $\rho(T)$ is $\sigma_{t\rho'(|v|)}(u,v)$. Since $\rho'(0)=1/2$, the time $2\pi$ map extends
smoothly over the zero section as the antipodal map. The resulting compactly supported symplectomorphism  $\tau(u, v)$ of $T^*S^2$
is called a model Dehn twist.

There is a smooth isotopy with compact support from $\tau^2$ to the identity, but no such symplectic isotopies exist.

\subsubsection{Contact  geometry of  sphere bundles}

The length $l$ sphere bundle  $H_l=\{|v|=l\}$  is a contact manifold with  contact form $\lambda_{can}$. At the point $(u, v)$ the
contact plane distribution $\xi=\ker \lambda_{can}$  is spanned by $(u\times v, 0)$ and $(0, u\times v)$.

The Reeb vector field at $(u, v)$ is the  vector field
$(v, 0)$. Thus there are two dimensional simple Reeb orbits, all with the same period, and they foliate $H_l$.
This is a special case of a Reeb flow of Morse-Bott type.
 In particular,
 the Reeb
flow agrees with the cogeodesic flow of $S^2$ with round metric.

The vector fields $(u\times v, 0)$ and $(0, u\times v)$ provide a global trivialization $\Phi$ of $\xi$. With respect to  $\Phi$,
the action of the Reeb flow on $\xi$ along any Reeb orbit in $H_l$ is considered as a path of matrices in $sp(2, \mathbb R)$, whose Maslov
index is defined to be
the  Conley-Zehnder index of the orbit (\cite{CoZ}, \cite{SZ}).
From the calculation in
\cite{Hind} (see also \cite{Evans}),
  simple Reeb orbits have Conley-Zehnder index $2$.

$H_l$ is in fact a contact-type hypersurface in $T^*_{l+\epsilon}S^2$, where  the Liouville vector field
is $(0, v)$.
In particular,  $\overline{T^*_{l}S^2}=\{|v|\leq l\}$ is a
Liouville domain with convex boundary $H_l$.

\subsubsection{Cylindrical coordinates}
To apply SFT, we need to change to cylindrical coordinates.
Consider a diffeomorphism  $\Psi:T^*S^2\to T^*S^2$,  $(u, v)\to (u, \psi(|v|)v/|v|)$, where    $\psi:[0, \infty)\to [0, \infty)$ is a  smooth increasing function
such that
$\psi(s)=s$ for $s$ small, and $\phi(s)=e^s$ for $s> r$.
$\Psi$ is the identity near the zero section,
and  $(T^*S^2, \Psi^*\omega_{can})$ is a symplectic manifold with one positive  cylindrical end. Let $\omega=\Psi^*\omega_{can}$.

Then
$(T_l^*S^2, \omega)$ is still a Liouville domain, with the Liouville field given by the unit field $\eta=(0, v/|v|)$ for $|v|> r$.
Moreover,  $(T^*S^2, \omega)$ is
the (cylindrical) symplectic completion  of  $(T_l^*S^2, \omega)$.

On $H_l$, the contact form   is $\lambda_l={\psi(l)\over l}\lambda$, and
the Reeb vector field at $(u, v)$ is $R_l=({l\over \psi(l)}v, 0)$.

\subsection{Lagrangian $S^2$ and good almost complex structures}\label{section:Lagrangian and good a.c.s.}
Let $L\subset (M, \omega)$ be a Lagrangian two sphere.   From the
Weinstein neighborhood theorem, the Lagrangian sphere $L$ has a
neighborhood $U$ symplectomorphic to $(T^*_{2r} S^2, \omega_{can})$ for some small $r>0$. 
Denote the symplectomorphism by $\Xi$. Let
$U_l=\Xi^{-1}(\overline{T^*_{l} S^2})$ for $l<2r$, and
$W_l=M\backslash U_l$ be the complement of $U_l$.

In
particular, $H=\partial U_l$ is a contact-type hypersurface with contact form $\lambda=\Xi_*^{-1} \lambda_l$.
\subsubsection{$J_{t}^0$ on $T^*S^2$}
Following \cite{Hind},
we make a specific
choice of $\omega-$compatible almost complex structure $J^0$ on $T^*S^2$ as follows:  near the zero section,
 $J^0(X, 0)=(0, X)$; and for $|v|>r$,
$$J^0|_{(u,v)}(v, 0)=(0, {\psi(l)\over l}v), \quad J^0|_{(u,v)}(u\times v, 0)=(0, u\times v).$$
$J^0$ is $SO(3)$-invariant, and $J^0$ is adjusted in the sense that, for $|v|>r$,  it is ${\partial\over \partial s}-$invariant, sending the Liouville field  to the Reeb field.

Choose $l\in (r, 2r)$. When restricted to the Liouville domain $(\overline{T^*_lS^2}, \omega)$, $J^0|_{\overline{T^*_lS^2}}$ is adjusted in the collar neighborhood $r<|v|\leq l$,
and its cylindrical completion is canonically identified with   $(T^*S^2, J^0)$.

We need to further consider a deformation $J^0_t$ of  $J^0$.
Let $V_t=[-t-\epsilon,t+\epsilon]$ and  $\beta_t:V_t\rightarrow[-\epsilon,\epsilon]$ be a
strictly increasing function with $\beta_t(s)=s+t$ on
$[-t-\epsilon,-t-\epsilon/2]$ and $\beta_t(s)=s-t$ on
$[t+\epsilon/2,t+\epsilon]$.
Define a smooth embedding
$f_t:V_t\times H_l\rightarrow T^*S^2$ by:
$$f_t(s,m)=(\beta_t(s)+l, m).$$
Let $\bar J_t$ be the ${\partial\over \partial s}-$invariant almost complex structure on $V_t\times H_l$
such that $\bar J_t({\partial\over \partial s})=R_l$ and $\bar J_t|_{\xi}=J^0|_{\xi}$.
Glue the almost complex manifold $(T^*S^2\backslash f_t(V_t\times H_l), J^0)$ to $(V_t\times H_l, \bar J_t)$
via $f_t$ to obtain the family of almost complex structures $J^0_t$ on
 $T^*S^2$.

 Notice that  each $J^0_t$ agrees with $J^0$ away from the collar $l-\epsilon<|v|<l+\epsilon$. And on this collar,  it agrees with $J^0$
on $\xi$, while $J^0_t|_{(u,v)}(v, 0)=(0, {d\beta_t^{-1}\over ds}|_{s= |v|-l}{\psi(l)\over l}v)$.

On the other hand,  via $f_t$,  $J^0_t$ restricted to  $\overline{T^*_lS^2}$ is the same as $J^0$ on $\overline{T^*_{l+t}S^2}$.
In particular, $J^0_{\infty}$ can be viewed an almost complex structure on $T^*S^2$, which is in fact equal to $J^0$.



\subsubsection{Neck-stretching  on $M$}\label{section:good almost complex
structures}

We say that an almost complex structure
 $J$ on $M$ is \textit{adjusted} to
$H=\partial U_l$ with respect to the Liouville vector field
$\Xi^{-1}_*(\eta)$, if in a tubular neighborhood of $H$, $J$ is
invariant under the flow $\Xi^{-1}_*(\eta)$,
$J(\Xi^{-1}_*(\eta))$ is the Reeb vector field on $H$, and $J$ preserves the contact plane field   $\zeta$
 defined by the contact structure $i_\eta\omega$.

Following \cite{EvansT} consider the following Fr\'echet manifold of adjusted almost complex structures:

\begin{equation}\label{J-condition}
\overline{\JJ}=\{J\in \JJ_\omega:J=\Xi^{-1}_*J^0\text{ on }U\}.
 \end{equation}

\nono Given $J\in \overline{\JJ}$, define
 $$ J_t=J \text{ on } X\backslash U, \quad  J_t=\Xi^{-1}_*J_t^0\text{ on }U.$$
 Notice that $J_t$ is in fact the
\textit{neck-stretching} of the adjusted $J$ along
$\partial U_l$ with respect to $\Xi^{-1}_*(\eta)$.
Fix a sequence $\{t_i\in\RR:
t_i\rightarrow+\infty\}$, we further define a sequence of Fr\'echet manifolds of adjusted almost complex structures:
$$\overline{\JJ}(i)=\{J\in \JJ_\omega: J=\Xi^{-1}_*J^0_{t_i} \text{ in } U\}.$$
\nono From the explicit description of $J_{t_i}$ in 2.2.1, we can reverse the
neck-stretching, thus there is a  diffeomorphism
$P_i:\overline{\JJ}(i)\rightarrow \overline{\JJ}$.

 When $i\to \infty$ the neck-stretching process  results in an almost complex structure $J_{\infty}$ on
 the union of symplectic
completions $\overline{W}$ and $\overline{U}$ of $W$ and $U_r$.
$\overline{W}$ and $\overline{U}$ are two open symplectic
manifolds with cylindrical ends, with $(\overline{U}, J^{\infty})$ being  $(T^*S^2, J^0)$.
$J_{\infty}$ on  the
cylindrical end of $\overline{W}$ can be described explicitly: one simply extends $\eta$ in the
obvious way, and endows an $\eta$-adjusted almost complex structure
which still restricts to $J$ on $\zeta$ as above.

To describe the  limits of
pseudo-holomorphic curves under the deformation $J_t$, we need another open symplectic manifold.
Let $SH$ be the symplectization of the
contact manifold $H$.  We endow $SH$ again the $\eta$-adjusted
almost complex structure as on the cylindrical ends of
$\overline{W}$ and $\overline{U}$, and also denote it by $J_{\infty}$.


\subsection{Finite energy holomorphic curves}

Suppose $S$ is a closed Riemann surface
and $\Gamma\subset S$ an ordered  finite set of
punctures.

Let $(Z,  \omega)$ be any of the three symplectic 4-manifolds $\overline{W}$, $\overline{U}$, or
$SH$, each equipped with the adjusted almost complex structure $J_{\infty}$.
Denote $E^+$ ($E^-$) to be the positive (negative) end, which is allowed to be empty.

  Notice that, since $J^0_{\infty}({\partial\over \partial s})=R_l$, and $\xi$ is $J^0_{\infty}-$invariant,  the real trivialization $\Phi$ of $\xi$ on $H_l$  canonically induces a complex trivialization of the complex rank 2 bundle
$(TZ, J_{\infty})$ along $E^{\pm}$, which we still denote by $\Phi$.

Suppose $u:S\backslash \Gamma\to Z$ is a proper map.
$u$ is called simple if it does not factor through a multiple cover.

Let $u^{\pm}$ be the restriction to $u^{-1}(E^{\pm})$.
Then $u^{\pm}$ has the form $(a_{\pm}, v_{\pm})$ in coordinates $\RR_{\pm} \times H$. Consider the set $\mathcal C$ of
 functions  $\phi_{\pm}:\RR_{\pm}\to \RR$ with
 integral $1$.

The $\lambda-$energy of a  map $u:S\backslash \Gamma\to Z$ is defined by
$$E_{\lambda}(u)=\sup _{\phi_{\pm}\in \mathcal C}(\int_{u^{-1}(E^{+})}(\phi_{+}\circ a_{+})da_{+}\wedge v_{+}^*\lambda+
\int_{u^{-1}(E^{-})}(\phi_{-}\circ a_{-})da_{-}\wedge v_{-}^*\lambda).
$$
The energy of $u$ is then given by
$$E(u)=\int_{u^{-1}(Z\backslash(E^+\cup E^-))} u^*\omega+ E_{\lambda}(u).$$
$u$ is called a finite energy map if $E(u)< \infty$.
Since we are in the Morse-Bott situation, i.e the Reeb flow on $E^{\pm}$ is Morse-Bott, finite energy $J_{\infty}-$holomorphic curves
are asymptotic to periodic orbits in $E^{\pm}$ (\cite{B}).

Suppose $S$ has genus $g$, and $u$ has $s^+$ positive punctures converging to $\gamma^+_k, 1\leq k\leq s^+$, $s^-$ negative punctures
converging to $\gamma^-_k, 1\leq k\leq s^-$.
Two such maps $u$ and $u'$ are called equivalent if there is a biholomorphism $h:(S, \Gamma)\to (S', \Gamma')$ such that $u=u'\circ h$.

Each $u$ is associated with a CR operator, and $u$ is called (SFT) regular if the operator is surjective (\cite{Evans}). Denote the index of this operator by
$\text{index}(u)$. To state the index formula,
suppose $n_i^+=\hbox{cov}(\gamma_i^+)$ and $n_j^-=\hbox{cov}(\gamma_j^-)$, where  $\hbox{cov}(\gamma)$ denotes the multiplicity of $\gamma$ over a simple Reeb orbit.  Since each Reeb orbit is in a 2 dimensional manifold and has CZ index 2,
following the computation on \cite{Hind} and \cite{B},  we  have:

\eqone\label{general index formula}
 \text{index}(u)=-(2-2g)+2s+2c_1^\Phi([u])+\sum^{s^+}_{k=1}2\hbox{cov} (\gamma^+_k)-\sum^{s^-}_{k=1}2\hbox{cov}(\gamma^-_k).\eqtwo
Here  $c_1^{\Phi}(TZ)$
is the relative first Chern class  of $(TZ, J_{\infty})$  relative to the trivialization $\Phi$ along the ends,
$[u]$ is the relative homology class of $u$ (\cite{Evans}).

The following is a very special case of a theorem due to Wendl, which states that for certain $u$, the SFT regularity is automatic.

\thmone[Wendl, \cite{Wendl}] \label{at} Suppose $(W,J)$ is a $4$-dimensional
almost complex manifold with cylindrical end modelled on contact manifolds foliated by Morse-Bott Reeb orbits, and
$u:(S, \Gamma)\to W$ is a embedded pseudo-holomorphic curve with punctures.
If
\eqone\label{Wendl's equation} {\rm{index}}(u)>2g+2|\Gamma|-2,\eqtwo

\nono then $u$ is regular.
\thmtwo

\subsubsection{Regular holomorphic curves in $\overline W$}
We discuss the SFT transversality in $\overline W$.

\rmkone\label{open set} It is well-known, for example by Remark 3.2.3 in \cite{MSJ} that,
to achieve  transversality for the moduli space of pseudo-holomorphic curves,
it suffices to consider the space of $\omega-$compatible almost complex structure which
is fixed on an open set, provided that every pseudo-holomorphic curve representing the
class passes through its complement.
\rmktwo

Recall that a Baire set is the countable intersection of open and dense sets.
Since no punctured pseudo-holomorphic curves can lie completely inside $\overline U$, the arguments to prove Theorem 5.22 in \cite{Evans}
also proves:
\propone\label{evans}
Using notations in Section \ref{section:Lagrangian and good a.c.s.}, there exists a Baire set in $\bar \JJ_W\subset \bar \JJ$ such that for any $J\in \bar \JJ_W$,
$J_{\infty}$ is  SFT regular in the sense that every finite energy $J_{\infty}-$holomorphic curve $u$ is regular.
\proptwo

 We will need  variations of other standard transversality results about pseudo-holomorphic curves, where the above  observation will be crucial.

\subsubsection{Genus $0$  curves in $SH$ with a single simple asymptote}

In $SH$ we will encounter curves as in the following lemma.

\lemmaone\label{sh}
Suppose $u:C\to SH$ is a $J_{\infty}-$holomorphic curve of genus 0 in $SH$   with one positive end asymptotic to a simple Reeb orbit.
Then $u$ is a trivial cylinder.
\lemmatwo

\pfone The proof is contained in   Lemma 7.5 \cite{Evans} (see also \cite{Hind}, \cite{compactness}). We briefly recall the main points.
Since each Reeb orbit is  non-trivial in $\pi_1(H)$ and $C$ has genus 0,
  there has to be at least one negative  puncture.
On the other hand, since $E_{\lambda}(u)\geq 0$  and  all Reeb orbits  have the same period, $u$ has at most one negative puncture, which has to be simple.
 Thus $u$ is  a trivial cylinder.
\pftwo


\subsubsection{$J^0-$holomorphic planes in $T^*S^2$}

In $T^*S^2$ we need to consider embedded holomorphic planes with one (positive) end asymptotic to a simple Reeb orbit.

As mentioned, on $T^*S^2$, $J_{\infty}$ is the same as $J^0$.
Notice that $J^0$ interchanges the two summands of the Lagrangian splitting of the tangent bundle of $T^*S^2$.
Thus $\det (TT^*S^2, J^0)$ is canonically trivialized since the Lagrangian horizontal two plane bundle is orientable.
The expected dimension of the moduli space of embedded $J^0-$holomorphic plane $u$ with one (positive) end asymptotic to a simple Reeb orbit is thus given by
\eqone \label{index=2}\hbox{index}(u)=-2+2+2=2.\eqtwo
This follows from the general index formula \eqref{general index formula}, and the vanishing of
$c_1^\Phi$ for all punctured curves in $T^*S^2$.

It is proved in Lemmas $8$ and $9$ and Section $4$ in \cite{Hind}
  that if $\tilde J^0$ is close to  $J^0$
and  any embedded $\tilde J^0-$holomorphic planes with
one simple puncture is regular,  then $\tilde J^0$ enjoys the following properties:

\enone [(1)]
\item There are two $\tilde J^0$-foliations $\mathcal{F}_\alpha$ and $\mathcal{F}_\beta$
in $T^*S^2$, such that there is a one-one correspondence from simple
Reeb orbits to planes in each foliation;
\item Each element in $\fF_\alpha$ ($\fF_\beta$, resp.) intersects the zero-section
at a single point positively (negatively, resp.). \entwo

\nono We will call the planes in $\fF_\alpha$ ($\fF_\beta$, resp.) $\alpha$-planes
($\beta$-planes, resp.).

One consequence of \eqref{index=2} is that we can appeal to Wendl's Theorem \ref{at} to conclude that
each embedded $J^0-$holomorphic planes with one  simple puncture is regular.
In particular, $J^0$ also satisfies the above properties.
Furthermore, we have

\lemmaone\label{U-plane characterization} A $J^0$-holomorphic
plane in $T^*S^2$ asymptotic to a simple Reeb orbit belongs to either
$\fF_\alpha$ or $\fF_\beta$. Moreover, an $\alpha$-plane and a $\beta$-plane intersect transversally if they do not share the same asymptote.
\lemmatwo

\pfone The proof is largely similar to Lemma $8$ in \cite{Hind}. One
could think of $T^*S^2$ topologically as a neighborhood of
$\bar{\Delta}$, the anti-diagonal in $S^2\times S^2$.  The
complement is then a disk bundle over $\Delta$ the diagonal, of
which the boundary of disk fibers coincides with the simple Reeb
orbits in $T^*S^2$.  One can then glue these disks to elements in
$\fF_\alpha$ and $\fF_\beta$, resulting in two foliations in
$S^2\times S^2$, with classes $[S^2\times pt]$ and $[pt\times S^2]$,
respectively.  Suppose we have a $J^0$-holomorphic plane $P$
in $U$ asymptotic to some simple Reeb orbit $\gamma$, which does not
belong to either $\fF_\alpha$ nor $\fF_\beta$, it must intersect
some $P_\alpha\in \fF_\alpha$ and $P_\beta\in \fF_\beta$ positively,
where $P_\alpha$ and $P_\beta$ have asymptotes
$\gamma_\alpha,\gamma_\beta$ which are different from $\gamma$.  Now
$P$, $P_\alpha$ and $P_\beta$ can all be capped in $S^2\times
S^2$ by the above procedure, resulting in three spheres intersecting
only in $U$.  By construction, the sphere formed by capping $P$ has
positively intersection with both $[S^2\times pt]$ and $[pt\times
S^2]$, but intersects $\Delta$ at a single point, which leads to a
contradiction.

The second assertion can be proved similarly, for if
$\gamma_\alpha\neq\gamma_\beta$, the capped sphere does not have
intersection in the complement of $T^*S^2$, so they must intersect inside
$T^*S^2$ for homological reason.

\pftwo

\rmkone
If we do not appeal to Wendl's automatic transversality result,  instead of $J^0$, we could simply use a fixed $\tilde J^0$ satisfying
the properties above throughout
the paper.
\rmktwo

\subsubsection{SFT compactness}\label{GH}

 Following \cite{Hind} we briefly recall the relevant compactness results in the symplectic field theory adapted to
our case.  For detailed expositions on the subject,
we refer the readers to \cite{compactness} and \cite{B}.\\

Let $M_\infty=\overline{W}\cup SH\cup\overline{U}$, and $J_\infty$
be the almost complex structure defined as in section \ref{section:Lagrangian and good a.c.s.}.
 Let $\Sigma$ be a Riemann surface with nodes.  A \textit{level-$k$ holomorphic building} consists of the following data:
\begin{enumerate}[(i)]
 \item (level) A labelling of the components of $\Sigma\backslash\{\text{nodes}\}$ by integers $\{1,\cdots,k\}$ which are the \textit{levels}.
Two components sharing a node differ at most by $1$ in levels. Let $\Sigma_r$ be the union of the components of $\Sigma\backslash\{\text{nodes}\}$ with label $r$.

 \item (asymptotic matching) Finite energy holomorphic curves $v_1:\Sigma_1\rightarrow U$, $v_r:\Sigma_r\rightarrow SH$, $2\leq r\leq k-1$, $v_k:\Sigma_k\rightarrow W$.
Any node shared by $\Sigma_l$ and $\Sigma_{l+1}$ for $1\leq l\leq k-1$ is a positive puncture for $v_l$ and a negative puncture for $v_{l+1}$ asymptotic to the same
 Reeb orbit $\gamma$.  $v_l$ should also extend continuously across each node within $\Sigma_l$.
\end{enumerate}

Now for a given stretching family $\{J_{t_i}\}$ as previously described, as well as $J_{t_i}$-curves $u_i:S\rightarrow(M,J_{t_i})$, we define the Gromov-Hofer convergence as follows:

A sequence of $J_{t_i}$-curves $u_i:S\rightarrow(M,J_{t_i})$ is said to be \textit{convergent to a level-$k$ holomorphic building} $v$
 in Gromov-Hofer's sense, using the above notations, if
there is a sequence of maps $\phi_i:S\rightarrow \Sigma$, and for each $i$, there is a sequence of $k-2$ real numbers $t_i^r$, $r=2,\cdots,k-1$, such that:

\enone[(i)]
 \item (domain) $\phi_i$ are locally biholomorphic except that they may collapse circles in $S$ to nodes of $\Sigma$,
 \item (map) the sequences $u_i\circ\phi_i^{-1}:\Sigma_1\rightarrow U$, $u_i\circ\phi_i^{-1}+t_i^r:\Sigma_r\rightarrow SH$, $2\leq r\leq k-1$, and
$u_i\circ\phi_i^{-1}:\Sigma_k\rightarrow W$ converge in $C^\infty$-topology to corresponding maps $v_r$ on compact sets of $\Sigma_r$.
\entwo

\nono Now the celebrated compactness result in SFT reads:

\thmone[\cite{compactness}] \label{cpt} If $u_i$ has a fixed homology class, there is a subsequence $t_{i_m}$ of $t_i$ such that $u_{t_{i_m}}$ converges
to a level-$k$ holomorphic building in the Gromov-Hofer's sense.
\thmtwo

\section{Minimal intersection}\label{minimal-intersection}
In this section we prove Theorems \ref{main} and \ref{general}. There are two main ingredients, the symplectic Seiberg-Witten theory
which produces embedded, connected pseudo-holomorphic submanifolds for a class of compatible almost complex structures  suitable for applying symplectic field theory.
Via neck stretching the symplectic field theory then produces in the limit the desired symplectic  surfaces which intersect $L$ minimally.

\subsection{Embedded and nodal pseudo-holomorphic submanifolds}
We first introduce some notations. All surfaces in this section are closed. Given a class $e\in H_2(M, \ZZ)$, let $\eta_\omega(e)$ be the $\omega-$symplectic genus of
$e$:

\begin{equation} \label{genus(e)}
\eta_{\omega}(e)=\frac{e\cdot e+K_{\omega}(e)+2}{2}.
\end{equation}
This is exactly the genus of a connected embedded $\omega$-symplectic surface
in class $e$ (if there is one) from the adjunction formula.

 Also  define the dimension of $e$

\begin{equation}\label{dim(e)}
d(e)=\frac{-K_\omega(e)+e\cdot e}{2}.
\end{equation}
$d(e)$ is the expected dimension of the moduli space of embedded pseudo-holomorphic curve of genus $\eta_{\omega}(e)$ in the class $e$.
In terms of $\eta_{\omega}(e)$, $d(e)$ can also be expressed as:
$$d(e)=-K_\omega(e)+\eta_\omega(e)-1.$$

 Suppose $C$ is a compact, connected, pseudo-holomorphic submanifold of $M$. Then $C$ has the structure of a Riemann surface and
 it represents a nonzero class $[C]$.
  Moreover,  there is a canonically associated elliptic operator
 \begin{equation}\label{D-C} D_C:\Gamma(N)\to \Gamma(N\otimes T^{1, 0}C),\end{equation}
 where $N$ is the normal bundle of $C$.
$D_C$ is called the normal operator of $C$ and the index of $D_C$ is exactly given by $d([C])$.

Fix a set $\Omega$ of $d([C])$ distinct points. If $\Omega\subset C$, then
we can define the operator
$$D_C\oplus ev_{\Omega}: \Gamma(N)\to \Gamma(N\otimes T^{1, 0}C)\oplus (\oplus_{p\in \Omega}N|_p).$$
The index of $D_C\oplus ev_{\Omega}$ is $0$. And the kernel of
$D_C\oplus ev_{\Omega}$ should be thought of as giving a sort of
Zariski tangent space to the space of pseudo-holomorphic embeddings
of $C$ in $M$ containing the subset $\Omega$ (as a point in the
space of smooth embeddings). $C$ is called $(J, \Omega)$
non-degenerate if the operator $D_C\oplus ev_{\Omega}$ has trivial
cokernel (and also trivial kernel).

$D_C$ is a real CR operator on $(C, N)$. For such operators, there is the following automatic transversality result.

\begin{theorem}[\cite{HLS97}, \cite{IS99}]\label{auto-tran} Let $\Sigma$ be a Riemann surface of genus $g$, and let $L$ be a complex line bundle over $\Sigma$. Let $D$ be a real CR operator.
Suppose $c_1(L)\geq 2g-1$, then $\rm{coker} D=0$.
\end{theorem}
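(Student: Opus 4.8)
The plan is to prove the statement by reducing it to a vanishing theorem for the kernel of the formal adjoint operator, combined with a Riemann--Roch index count. The key observation is that a real linear Cauchy--Riemann operator $D$ on a complex line bundle $L$ over a closed Riemann surface $\Sigma$ has the same symbol as $\bar\partial$, hence is Fredholm of the same index, namely $\operatorname{ind}_{\mathbb R} D = 2\bigl(c_1(L) + 1 - g\bigr)$ by the Riemann--Roch theorem. To show $\operatorname{coker} D = 0$ it suffices, by elliptic theory, to show that the $L^2$-adjoint operator $D^{*}$ has trivial kernel. Here $D^*$ is again a real CR-type operator, but acting on sections of the bundle $\overline{L}^{*} \otimes K_\Sigma$ (the twist by the canonical bundle $K_\Sigma$ coming from the target $N\otimes T^{1,0}C$ in \eqref{D-C}), whose degree is $(2g-2) - c_1(L)$.

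First I would record the crucial \emph{unique continuation / similarity principle} for solutions of $D\sigma = 0$ (and likewise $D^*\tau=0$): by the Carleman-type estimates of Aronszajn, or by the Hartman--Wintner local representation, a nontrivial section $\sigma$ in the kernel of a linear CR operator on a line bundle vanishes only at isolated points, and at each such zero it has a well-defined positive local multiplicity (the local winding number of $\sigma$ around the zero). Consequently, if $\ker D^* \neq 0$, pick a nonzero $\tau\in\ker D^*$; the sum of its local multiplicities is a nonnegative integer equal to the count of zeros with sign, and a standard argument (integrating $d\arg\tau$ over $\Sigma$, using that $\tau$ is a section of $\overline{L}^{*}\otimes K_\Sigma$) shows this count equals the degree $\deg(\overline{L}^{*}\otimes K_\Sigma) = 2g - 2 - c_1(L)$. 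Under the hypothesis $c_1(L)\ge 2g-1$ this number is $\le -1 < 0$, which is impossible for a count of zeros of a section that vanishes only at isolated points with positive multiplicities. Hence $\ker D^* = 0$, and therefore $\operatorname{coker} D = 0$.

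The main obstacle — and the step requiring the most care — is making the zero-counting argument rigorous for a \emph{real} linear CR operator rather than an honest holomorphic structure: one cannot simply invoke that $\tau$ is a holomorphic section. The fix is the Hartman--Wintner asymptotic expansion, which says that in a local trivialization near a zero $z_0$, any solution of the perturbed equation $\bar\partial \tau + a\tau + b\bar\tau = 0$ (with $a,b$ bounded measurable) satisfies $\tau(z) = c\,(z-z_0)^{k} + o(|z-z_0|^{k})$ for some integer $k\ge 0$ and $c\neq 0$; this gives the well-defined local degree, and patching these local degrees via the argument principle on $\Sigma$ minus small disks around the zeros yields the global count $= \deg(\overline{L}^{*}\otimes K_\Sigma)$, exactly as in the holomorphic case. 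Everything else is bookkeeping: the identification of the adjoint bundle, the Riemann--Roch index, and the observation that $c_1(L)\ge 2g-1$ forces the adjoint degree to be negative. (One could alternatively phrase the whole argument on the $D$ side using the index formula: $\operatorname{coker} D \ne 0$ would force $\ker D$ to have dimension exceeding the index bound, but the cleanest route is the adjoint-vanishing argument above.)
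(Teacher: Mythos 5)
Your argument is correct, and it is essentially the proof given in the cited references \cite{HLS97}, \cite{IS99}: the paper itself states Theorem \ref{auto-tran} as a quoted result with no proof, so there is nothing internal to compare against, but your route (identify $\operatorname{coker}D$ with $\ker D^{*}$, conjugate to a genuine CR operator on the Serre-dual twist of degree $2g-2-c_1(L)$, and use the Carleman/Hartman--Wintner similarity principle to force all zeros of a nontrivial kernel element to be isolated of positive multiplicity, contradicting negative degree) is exactly the standard one. The only blemish is notational: the bundle you name, $\overline{L}^{*}\otimes K_\Sigma$, has degree $c_1(L)+2g-2$ rather than the degree $2g-2-c_1(L)$ you correctly use; the adjoint kernel naturally lives in $\overline{K_\Sigma}\otimes L$ and conjugation places it in $K_\Sigma\otimes\overline{L}$ (equivalently one argues on $K_\Sigma\otimes L^{-1}$), either of which has the degree your counting argument requires.
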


We will show in the next two subsections that in two situations, given a class $e$, there is a Baire set of pairs $(J, \Omega)$ for which there are {\bf connected} $J-$holomorphic
submanifolds of genus $\eta_{\omega}(e)$ through $\Omega$.
The Baire property is shown by  first setting up
 universal models of various type of pseudo-holomorphic curves, and then exploiting the Fredoholm properties of $D$
  in conjunction with the Sard-Smale theorem and the Gromov compactness theorem to rule out unwanted behavior for
  generic pairs $(J, \Omega)$.

We also need to generalize to the case of a nodal pseudo-holomorphic
submanifold in the sense of Sikorav (\cite{Sik}). Let $\Sigma=\cup
\Sigma_i$ be a nodal Riemann surface, where $\Sigma_i$ are the
irreducible components. A $J-$holomorphic map $f:\Sigma\to (M, J)$
is said to be nodal if $f$ has distinct tangents along two branches
at each node. For our purpose, we call a nodal curve $f$
 a nodal submanifold if  $f$ is an embedding on each $\Sigma_i$.
 Thus a nodal submanifold is a union of embedded submanifolds intersecting transversally. Let $C_i=f(\Sigma_i)$.

 For a nodal submanifold, the analogue of \eqref{D-C}, $D_{\cup C_i}$,  is  defined in Section 4 in \cite{Sik} in terms of the normalization of $\Sigma$. $D_{\cup C_i}$ is elliptic and its index is simply given by
 $\sum_i d([C_i])$.

 In this case, for each $i$, fix a subset $\Omega_i\subset C_i$ with $d([C_i])$ distinct points and not containing any of the nodes.
 Then the operator $D_{\cup C_i}\oplus ev_{\cup \Omega_i}$ is an elliptic operator with index zero, and $f$ is called non-degenerate if $D_{\cup C_i}\oplus ev_{\cup \Omega_i}$
 has trivial cokernel.

 The automatic transversality in this context,  Corollary 2 in  \cite{Sik},  implies that $D_{\cup C_i}\oplus ev_{\cup \Omega_i}$ is onto if
  \begin{equation}\label{nodal}  -K_{\omega}([C_i])>0, \quad \hbox{ for each $i$.}
 \end{equation}

\subsubsection{Symplectic spheres}

Suppose $C$ is an embedded symplectic sphere with self-intersection at least $-1$.
In this case
\eqone\label{genus 0}
d([C])=-K_\omega([C])-1, \quad [C]\cdot [C] = -K_\omega([C])-2.
\eqtwo

The following should be well known. We present some details in view of the generalization to certain configurations, Proposition \ref{symplectic connectedness'}.

\propone\label{symplectic connectedness} Let $(M,\omega)$ be a
symplectic 4-manifold, $e\in H_2(M;\ZZ)$ with $e^2\geq -1$ a class represented by
an embedded symplectic sphere $C$.  Then
 there is a path connected Baire subset $\mathcal T_e$ of $\JJ_\omega\times M_{d(e)}$ such that a pair $(J, \Omega)$ lies in $\mathcal T_e$
 if and only if  there is a unique  embedded $J-$holomorphic sphere in the class $e$ containing $\Omega$.
Here $M_d$ is the space of $d-$trples of distinct (but unlabeled) points in $M$.
 Consequently, any symplectic sphere in the class $e$ is isotopic to $C$.

\proptwo
\pfone

 Pick an almost complex structure $J\in\JJ_{\omega}$ such that
$C$ is $J$-holomorphic and $\Omega\subset C$.

Following \cite{Barraud} (Lemma 4 and formula (15)) and \cite{IS99}, let $P=-\sum_{z_i\in \Omega} z_i$ be the divisor of $C$ and $\tilde N=N\otimes P$.
Then there exists a real CR operator on $(C, \tilde N)$,
$$\tilde D_C:\Gamma(\tilde N)\to \Gamma(\tilde N\otimes T^{1, 0}C),$$
with the property that $\hbox{coker} \tilde D_C\cong \hbox{coker} (D_C\oplus ev_{\Omega})$.
Notice that, by \eqref{genus 0},
$$c_1(\tilde N)=c_1(N)-d([C])=e\cdot e-d(e)=-1.$$
From Theorem \ref{auto-tran}, $\tilde D$ is surjective.

Notice that $d(e)\geq 0$. Moreover,
 from the positivity of
intersections and the fact that $e\cdot e =d(e)-1$, $C$ is the only
connected $J$-sphere in $e$ containing $\Omega$. Since $\tilde D$ is surjective, $C$ is regular with respect to $(J, \Omega)$.
Thus we conclude that the genus 0 Gromov-Witten invariant of $e$ passing through
$d(e)$ points is $\pm 1$, in particular, nonzero.

A marked $\mathbb P^1$ is a pair $(\mathbb P^1, \{z_i\})$ where $\{z_i\}$ is a set of unordered, distinct points.
 Now introduce the universal genus zero moduli space $\mathcal P$ associated to $e$, which is the space of  $J-$holomorphic embedding $u:(\mathbb P^1, \{z_i\}_{i=1}^{d(e)})\to (M, J)$ with $[u]=e$ for some $J\in \mathcal J_{\omega}$, modulo the automorphism of $\mathbb P^1$. $\mathcal P$ is a Frechet manifold (\cite{MSJ}). Moreover, the natural map $\pi$
 to $\mathcal J_{\omega}\times M_{d(e)}$, $(u,J,\{z_i\})\to (J, \{u(z_i)\})$ is Fredholm.
 The argument above simply means that $\pi$ is an isomorphism onto its image.

 Similarly, for each  possible singular type $c$, introduce the auxiliary universal moduli space  $\mathcal P_c$. Each $\mathcal P_c$ is again a Frechet manifold  and the projection $\pi_c: \mathcal P_c\to \mathcal J_{\omega}\times M_{d(e)}$ is Fredholm (\cite{MSJ}) with index at most $-2$.
 Notice that the image of $\pi$ and the union of the images of $\pi_c$ cover $\JJ_{\omega}\times M_{d(e)}$ by the non-triviality of the Gromov-Witten invariant.
 Since each $\pi_c$ has negative index, the complement of the image of $\pi_c$ is exactly the set of regular values of $\pi_c$, hence is Baire.
 This implies the image of $\pi$ is Baire.

 Now we show that the image of $\pi$ is path connected. Let $(J', \Omega')$ be in the image of $\pi$. The Sard-Smale theorem implies that along  a generic path $(J_t, \Omega_t)$ connecting
 $(J, \Omega)$ and $(J', \Omega')$, for each $t$, $(J_t, \Omega_t)$ is either a regular value of projections $\pi$ and $\pi_c$, or it is a singular value for one of the projections but
 the cokernel  has dimension  1. Since each $\pi_c$ has index $-2$ and $\pi$ has no singular values, each $(J_t, \Omega_t)$ lies in $\mathcal T$.

Finally, notice that the path connected set $\mathcal T$ maps onto the space of symplectic spheres in the class $e$.

\pftwo


For our application we need to take one  step forward.

\begin{definition} \label{configuration} We call
an ordered  configuration  of symplectic spheres $\cup C_i$
a \textit{stable spherical symplectic configuration} if

1.  $[C_i]\cdot
[C_i]\geq -1$ for each $i$,

2.  for any pair $i, j$ with $i\ne j$, $[C_i]\ne [C_j]$,  and $[C_i]\cdot [C_j]=0$ or $1$.

3. they are simultaneously $J-$holomorphic for  some $J\in \JJ$.

The homological
type refers to the set of homology classes $[C_i]$.
\end{definition}

Notice that, by local positivity of intersection, 2 and 3 imply that $C_i$ and $ C_j$
are either disjoint or
 intersect transversally at one point.
 In particular, it is a $J-$\textit{nodal} submanifold.
 Further, since $C_i\cdot C_i\geq -1$,  the condition \eqref{nodal} is satisfied by \eqref{genus 0}.

If we follow the arguments above, replacing Theorem \ref{auto-tran} by  Corollary  2 in \cite{Sik},  we obtain:

\propone\label{symplectic connectedness'}  Suppose there is a stable spherical symplectic  configuration $\cup_i C_i$ with type $D$.
Then there is a path connected Baire subset $\mathcal T_D$ of $\JJ_\omega\times \prod_i M_{d([C_i])}$ such that a pair $(J, \Omega_i)$ lies in $\mathcal T_D$
 if and only if  there is a unique  embedded $J-$holomorphic $D-$configuration with the $i-$th component  containing $\Omega_i$.
 Consequently,
stable spherical symplectic  configurations
with the same homological type
are isotopic. \proptwo

\subsubsection{Gromov-Taubes invariants when $b^+=1$}\label{section:GT invts}
Given a class $e$ and a pair $(J, \Omega)$ in $\JJ_\omega\times M^{d(e)}$, introduce the set $\mathcal H\equiv \mathcal H(e, J, \Omega)$ whose elements are the unordered sets of pairs $\{(C_k, m_k)\}$ of disjoint, connected, $J-$holomorphic submanifold
$C_k\subset M$ and positive integer $m_k$, which are constrained as follows:

1. If $e_k$ is the fundamental class of $C_k$ then $d_k\equiv d(e_k)\geq 0$.

2. If $d_k>0$, then $C_k$ contains  a subset $\Omega_k\subset \Omega$ consisting of precisely $d_k$ points.

3. The integer $m_k=1$ unless $C_k$ is a torus with trivial normal bundle.

4. $\sum_k m_k e_k = e$.

Notice that \eqref{dim(e)} and \eqref{genus(e)} imply that

\begin{itemize}
\item the only negative square components are spheres with square $-1$;

\item a square $0$ component is either a sphere or a torus;

\end{itemize}

 To define the Gromov-Taubes invariant of a class $e$, a notion of admissibility of pair
 is introduced in \cite{T1}.
 The Gromov-Taubes invariant $GT(e)$ of $e$ is then a suitably weighted count of $\mathcal H(e, J, \Omega)$ for an admissible $(J, \Omega)$, which is
 delicate  at the presence of  a toridal component  with multiplicity higher than 1. When $b^+=1$, we will see that there are simple homological conditions to avoid
such components.

 It is rather involved to fully describe the precise meaning  of admissible pairs, especially at
the presence of a toridal component  with multiplicity higher than 1.
In fact, in the case $d(e)=0$, $\Omega$ is the empty set, we are simply talking about the admissibility of $J$ alone. Furthermore, if
 there are no toridal components, $J$ is admissible if $\mathcal H(e, J)$ is a finite set, and each submanifold in a member of $\mathcal H(e, J)$
 is non-degenerate.


 It is also shown in \cite{T1} that the set of
  admissible pair is Baire. The argument is similar to the one in  Proposition \ref{symplectic connectedness}.
In fact, by Remark \ref{open set}, the intersection with each $\overline \JJ(i)$ is still Baire in $\overline \JJ(i)$ since
$U$ contains no closed pseudo-holomorphic curve.

When $C$ is a symplectic sphere with
self-intersection at least $-1$,
it is easy to show that $GT([C])=1$ using arguments in Proposition \ref{symplectic connectedness}. In general, when $b^+=1$, due to Taubes'
SW$\Rightarrow$GT \cite{T2} and the Seiberg-Witten wall crossing
formula, there are plenty of classes with non-trival GT invariant, and most of them are represented by connected embedded symplectic surfaces
(\cite{LL}, see also \cite{packing}, \cite{MIsotopy}, \cite{Li}) :

\propone\label{Assumptions when b+=1} Let $(M, \omega)$ be a
symplectic $4$-manifold with $b^+=1$ and canonical class $K_\omega$.
Let $A\in H_2(M;\ZZ)$ be a class satisfying the following
properties:
\begin{itemize}
 \item $A^2>0$ and $\omega(A)>0$;
 \item $A-\text{PD}(K_\omega)$ is $\omega$-positive and has non-negative square;
 \item $A\cdot E\geq0$ for all $E\in\EE_\omega$.
\end{itemize}
Then $A$ has non-vanishing GT invariant and $A$ is
represented by a  connected embedded symplectic surface. \proptwo


\lemmaone\label{gt}
Let $(M,\omega)$ be a symplectic
$4$-manifold with $b^+=1$.
Suppose $e\in H_2(M;\ZZ)$ is a class with $\eta_\omega(e)\geq2$, $e\cdot E\geq0$ for
all $E\in\EE_\omega$, and $GT(e)\neq 0$. Then for any admissible  $(J, \Omega)$, $A$
has a connected $J-$holomorphic representative of genus $\eta_\omega(e)$.
\lemmatwo

\pfone
Suppose $(J, \Omega)$ is admissible.
Let $C$ be  a $J-$holomorphic submanifold contributing to $GT(e)$. The condition that $e\cdot E\geq0$ for all $E\in\EE_\omega$ ensures $C$ has
no negative-square components.  Since $b^+(M)=1$, if $C$ is  disconnected, then all the components are homologous and have square $0$.
Thus $C$ is either  a union of spheres with square $0$, or a union of tori with square $0$.  However,  this contradicts the assumption that
$\eta_\omega(e)\geq2$ from the adjunction formula.  Therefore $C$ is a connected  genus $\eta_{\omega}(e)$ surface as claimed.

\pftwo

Furthermore, assume that $d(e)\geq 1$.  Let $\{U_i\}_{i=1}^{d(e)}$
be a sequence of pairwisely disjoint Darboux chart. We consider the
class of almost complex structures $\mathcal{F}_{\{U_i\}}\subset
{\JJ}_{\omega}$ which is fixed  and integrable on $U_i$.
By Remark
\ref{open set}, there is an admissible pair $(\tilde J, \tilde \Omega=\{x_i\})$
with $\tilde J\in \mathcal{F}_{\{U_i\}}$ and $x_i\in U_i$. In particular,
there is a connected  embedded $\tilde J-$holomorphic curve $\tilde C$ through $\{x_i\}$
with $[\tilde C]=e$.

 For any such $\tilde{J}\in \mathcal F_{\{U_i\}}$, let  $p:(M', \tilde{J}_{\{x_i\}})\to (M, J)$
 be the complex blow-up of $(M, \tilde{J})$ at $x_i$.
 Denote each exceptional sphere by $C_{x_i}$ and its neighborhood corresponding to $U_i$ by $U'_{i}$.
 One can then endow $M'$ with a symplectic form $\omega'$ compatible with $J_{\{x_i\}}$.
(see Lemma 7.15 in \cite{MSI}).
 Denote also $\mathcal F'_{\{U_i\}}\subset \JJ_{\omega'}$ to be the corresponding
 set of almost complex structures.

\lemmaone\label{lemma:blown-up curves, GT genericity} Given the same
assumption in Lemma \ref{gt} and consider $(M',J_{\{x_i\}})$ as
above. Let $E_i=[C_{x_i}]$, $i=1,\cdots,d(e)$ and $A'=A-\sum_{1\leq i\leq d(e)}E_i$. Then
$GT_{\omega'}(A')\ne 0$, and for $J$ in a Baire subset of
$\mathcal F_{\{U_i\}}$, $A'$ is represented by a connected
$J_{\{x_i\}}-$holomorphic surface of genus $\eta_{\omega}(A)$, intersecting each $C_{x_i}$ transversally
at one point.
\lemmatwo

 \pfone

Now $-K_{\omega'}(A')=-K_{\omega}(A)-d(e)$ and
$\eta_{\omega'}(A')=\eta_{\omega}(A)$.  Since $d(A')=0$, from the blow-up formula,
Corollary 4.4 in \cite{LL2}, $A'$ also has nontrivial GT
invariant.

Since the only $J'-$holomorphic curves contained in $\cup U'_{i}$
are  $C_{x_i}$, by Remark \ref{open set}, the intersection of
admissible almost complex structures on $(M', \omega')$ with
$\mathcal F'_{\{U_i\}}$ is a Baire set in $\mathcal F'_{\{U_i\}}$.

To check the generic connectedness, by Lemma \ref{gt}  we only need
to verify the homological condition $A'\cdot E\geq0$ for
any $E\in \EE_{\omega'}$. But as is shown above, there is  a
connected embedded $\tilde J-$holomorphic $\tilde C\subset M$, thus its proper
transformation $\tilde C'\subset M'$ is $J_{\{x_i\}}$-holomorphic with
$[\tilde C']=A'$. Notice also that every $E$ has a
$J_{\{x_i\}}-$holomorphic representative since exceptional classes
always have non-trivial GW invariant. Since the genus of $\tilde C'$ is
positive, it is different from any component of $E$. By positivity
of intersections, we have $[\tilde C']\cdot E\geq 0$.

Finally, since $\mathcal F'_{\{U_i\}}$ and $\mathcal F_{\{U_i\}}$ are canonically identified
via complex blowing up the $x_i$ and complex blowing down the $C_{x_i}$, we obtain
the required Baire subset of $\mathcal F_{\{U_i\}}$.

\pftwo

\subsection{Proof of Theorems \ref{main} and \ref{general}}

We are ready to prove Theorem \ref{main} and \ref{general}. For the
convenience of exposition, we first investigate the behavior of
generic $J$-holomorphic representatives in class $A$ in
neck-stretching, when the class $A$ satisfies

\eqone\label{equality}  -K_\omega(A)=1-\eta_\omega(A).\eqtwo

Firstly, regarding the fixed class $A$, we claim that there is a
Baire set $\JJ_{reg}(A)\subset \overline \JJ$ such that for each $J \in
\JJ_{reg}$, $J_{t_i}$ is GT admissible for each $i$, and $J_{\infty}$
is regular in the sense of SFT for $\overline W$.
By Proposition \ref{evans} there is a Baire subset
$\JJ_{reg}'\subset \overline \JJ$, such that for $J\in\JJ'_{reg}$,
$J_\infty$ is SFT regular.
Recall from  \ref{section:good almost complex structures},
$\overline \JJ(i)=\{J|J=J^0_{t_i}\text{ in }U\}$ and $P_i$ is the identification of $\overline \JJ(i)$ with $\overline \JJ$.  We have mentioned that, as all
closed pseudo-holomorphic curves have to pass through $M\backslash
U$,   there is a
 Baire subset $\overline \JJ(i)'\subset \overline \JJ(i)$
such that each member is GT admissible.  One then takes
$\JJ_{reg}(A)=\cap_n P_n(\overline \JJ_n') \cap \JJ_{reg}'$.

Fix $J\in \mathcal J_{reg}(A)$. By Lemma \ref{gt} there is a sequence
of connected embedded $J_{t_i}$-holomorphic submanifolds $C_{t_i}$.
If $C_{t_i}$ does not intersect $L$ for some $i<\infty$,
 the theorem follows.  Now we assume
  that each $C_{t_i}$ intersects $L$.  This assumption will
 eventually lead to  a contradiction when $[L]\cdot [C]=0$ and is automatically
  satisfied if $[L]\cdot [C]\neq 0$.

By Theorem  \ref{cpt}, there is  a
$k-$leveled curve $C_\infty$ as a Gromov-Hofer limit of
$\{C_{t_i}\}_{i=0}^\infty$: the piece in $M\backslash U_l$,
which we call $C_W$ or the $W$-part; the piece in the symplectization of
$\partial U_l=\RR P^3$ consisting of $k-2$ levels, which we call $C_{SH}$ or the $SH$-part;  the piece in
$U_l$, which we call $C_U$ or the $U$-part.
Let us first examine the $W$-part.

\lemmaone\label{simple punctures} Suppose \eqref{equality} is satisfied. Then $C_W$ is a, possibly unbranched
covering, irreducible genus-$\eta_\omega(A)$ curve, and all
asymptotic Reeb orbits are simple.  Moreover, let $\bar{C}_W$ be the
underlying simple curve, then the limits of punctures of $\bar{C}_W$
are pairwisely distinct. \lemmatwo

\pfone
By the maximum principle, $C_W$ is non-empty. Let $u_i:B_i\to W,
1\leq i\leq q,$ be the irreducible components of  $C_W$ and $g_i$
the genus of $B_i$.  Suppose $u_i$ is a degree $m_i$ multiple cover of $\bar u_i:
\bar B_i\to W$.

Notice that
$$c_1^\Phi=0 \text{ in } U \text{ and } S$$
implies that
\eqone \label{ui}  \sum_{1\leq j\leq q} c_1^\Phi(TW)([u_j])=-K_{\omega}(A).\eqtwo
From the description of Gromov-Hofer convergence in \ref{GH}, we clearly have $\sum_{1\leq j\leq q} g_j\leq
\eta_{\omega}(A)$.  \eqref{equality} then implies that
$$\sum_{1\leq j\leq q} c_1^\Phi(TW)([u_j])\leq 1-\sum_{1\leq j\leq q} g_j.$$

If $q>1$, there must be some component, say $B_1$, with

\eqone\label{new} c_1^\Phi(TW)([u_1])\leq -g_1.\eqtwo
By \eqref{general index formula}, we have
 \eqone\label{index formula}
 \text{index}(\bar u_1)=-(2-2g(\bar B_1))+2\bar s_1^-+2c_1^\Phi(TW)([\bar u_1])-\sum^{\bar s_1^-}_{k=1}2\hbox{cov}(\bar \gamma_{k})
  \eqtwo

\nono Here $\bar s_1^-$ is the total number of punctures of $\bar
u_1$ and the $\bar \gamma_k$ are the asymptotic Reeb orbits. By our
choice of $J$,  index$(\bar u_1)\geq 0$, thus  we must have \eqone
\label{bar} c_1^\Phi(TW)([\bar u_1])\geq 1-g(\bar B_1).\eqtwo Notice
that $c_1^\Phi(TW)([u_1])=m_1c_1^\Phi(TW)([\bar u_1])$. Since $2\bar
s_1^--\sum^{\bar s_1^-}_{k=1}2\hbox{cov}(\bar \gamma_{k})\leq 0$,
by (\ref{new}) we have  $$g_1\leq m_1(g(\bar B_1)-1).$$
But this is impossible by the Riemann-Hurwitz formula

\eqone\label{rh} (g_1-1)\geq m_1(g(\bar B_1)-1).\eqtwo

\nono This contradiction  shows that $C_W$ is  irreducible, namely,
given solely by $u_1$, when $J\in\JJ_{reg}$.
 By  (\ref{bar})  and \eqref{ui}, we have
 $$1-\eta_{\omega}(A)=m_1c_1^\Phi(TW)([\bar u_1])\geq m_1(1-g(\bar B_1)) .$$ Since $g_1\leq \eta_{\omega}(A)$, we have by
 \eqref{rh}, that $$\eta_{\omega}(A)=g_1.$$
Notice that this also means  $u_1$ is an unbranched covering.
Now return to \eqref{bar}, we find that

\eqone\label{index}(\bar u_1)=2\bar s_1^--\sum^{\bar
s_1^-}_{k=1}2\hbox{cov}(\bar \gamma_{k})\geq 0.\eqtwo

Hence we conclude that each $\bar \gamma_k$ is a simple Reeb orbit.
Since $u_1$ is an unbranched covering, each of its puncture also
converges to one of the simple Reeb orbits,  $\bar \gamma_k$.

One also sees from (\ref{index formula}) and (\ref{index}) that
$C_W$ must have genus $g$ and all asymptotes are simple.

Since the Reeb orbits form a two dimensional Morse-Bott family, the
last statement follows from the transversality of puncture
evaluation of $\bar{C}_W$ (Theorem 5.24 \cite{Evans}). \pftwo

Now we look at the  $S$-part $C_{SH}$.

\lemmaone\label{lemma:trivial in symplectization} Each component of $C_{SH}$ is a
trivial cylinder asymptotic to a simple Reeb orbit. \lemmatwo

\pfone
$C_{SH}$ has $k-2$ levels. Let $\tau_i:D_i\to SH$ be an
irreducible component of first level of $C_{SH}$. Since $C_W$ is connected and already
has genus $\eta_{\omega}(A)$, $D_i$ is of genus $g$ and has a unique positive
puncture since the domain of  $C_{\infty}$ is obtained by collapsing a genus $g$ surface. Moreover, due to the asymptotic matching condition between two levels,
this unique positive puncture of $\tau_i$ is asymptotic to a simple Reeb orbit since all the asymptotes of $\bar{C}_W$ are simple.
Thus  $\tau_i$ must be a trivial cylinder by Lemma \ref{sh}. Similarly  each component in higher level of $C_{SH}$ must be a cylinder as well
(In fact,  there can only be one level of trivial cylinders in $C_{SH}$ by  the finite automorphism requirement  of $C_{\infty}$, but we do not need this more precise description).
\pftwo

For the $U$-part $C_U$, in turn,
Lemma \ref{lemma:trivial in symplectization} implies that  all the positive
punctures of $C_U$ are simple due to the asymptotic matching condition between two levels. Moreover, each component $F_i$ is of  genus 0
and has only one positive puncture, again due to the constraint
$g(C_{\infty})= g$. Thus each $F_i$ is a plane with one simple
positive puncture. From Lemma \ref{U-plane characterization} and
Lemma \ref{simple punctures}, the $U$-part is a union of some
$\alpha$- and $\beta$-planes.

\lemmaone\label{U-planes are in the same family} If $C_W$ is not a
multiple cover, the $U$-part consists of either all $\alpha$-planes
or all $\beta$-planes. \lemmatwo

\pfone
The proof is similar to \cite{Evans} Lemma 7.8.
As is explained in Lemma \ref{U-plane characterization}, an
$\alpha$-plane and a $\beta$-plane do not intersect only if they
have the same asymptotic Reeb orbit.  This must be the case to avoid
self-intersection of the holomorphic building $C_\infty$ which
contradicts the embeddedness for $C_{t_i}$ at some $i<\infty$.

Therefore, if the $U$-part has at least one $\alpha$-plane and one
$\beta$-plane, all planes must asymptote to the same Reeb orbit.
If $C_W$ is not a
multiple cover, since the $C_{SH}$ part consists of trivial cylinders, this is impossible by the last statement of
Lemma \ref{simple punctures}.

\pftwo

\pfone[Proof of Theorem \ref{general}:] It is straightforward that
when $n\in\mathbb{N}$ is large, under the assumption of Theorem
\ref{general} the multiple class $nA$ has the following properties:  $d(nA)>0$,  $GT(nA)\ne 0$, and it
is represented by a connected symplectic surface with genus
at least $2$.

We adapt Welschinger's idea in \cite{Wel} and adopt the notations in Section \ref{section:GT invts} here.  Choose  Darboux
charts $U_i\subset W$, $i=1,\cdots,d(nA),$ and consider $\mathcal
F_{\{U_i\}}$ as in the paragraphs preceding  Lemma
\ref{lemma:blown-up curves, GT genericity}. Now choose $x_i\in U_i$ and an arbitrary $J\in \mathcal F_{\{U_i\}}$,
$A'=nA-\sum_{1\leq i\leq d(e)} E_{i}$  as in Lemma
\ref{lemma:blown-up curves, GT genericity}.
By Lemma \ref{lemma:blown-up curves, GT genericity} and the
arguments in the  paragraph following \eqref{equality}, there is a Baire set $\JJ_{reg}(nA)\subset
\overline \JJ\cap \mathcal F_{\{U_i\}}$
 such that for each $J \in \JJ_{reg}(nA)$, $(J_{\{x_i\}})_{t_j}$ is GT admissible for each $j$ and there is a connected
  embedded $(J_{\{x_i\}})_{t_j}$-holomorphic curve $C'_j$ in the class $A'$.
Moreover,  $(J_{\{x_i\}})_{\infty}$ is regular in the sense of SFT
for the symplectic completion of $p^{-1}(W)$.

Now let us analyze
the limit building $C'_\infty$.

Notice that $-K_{\omega}(A')=1-\eta_{\omega}(A')$, so Lemma \ref{simple punctures} could be applied. Also, from the
fact that $C_{x_i}\cap U=\emptyset$ and $A'\cdot E_i=1$, we have $C_{x_i}\cap C'_{W}=1$.  Therefore the
$W$-part of $C'_\infty$ cannot be a multiple cover. Therefore, by Lemma
\ref{U-planes are in the same family},
 $C'_\infty$  intersects $L$ transversally
at finitely many points, where either all the local intersections
are positive or all of them are negative.
This implies that
for some
$j<\infty$, there is an embedded $(J_{\{x_i\}})_{t_j}$-holomorphic
curve $C'_{t_j}$ in the class $A'$ with the same intersection property.
Notice that $C'_{t_j}$ intersects transversally with each $C_{x_i}$ at one point. One then obtain the
desired curve $C$ in the class $A$ by complex blowing down the
(disjoint) exceptional curves $C_{x_i}$.

\pftwo

\rmkone
When $\kappa=-\infty$, given $A$ in Theorem \ref{general}, we can actually find a symplectic surface intersecting
$L$ minimally in the class $A$, rather than $nA$ for large $n$, if we further assume that $\eta(A)\geq 2$ and $A^2\geq \eta(A)-1$.
Here $\eta(A)$ is the symplectic genus (see Section \ref{section:K-Lagrangian}).
This is because, by \cite{Li^2} one
could achieve the non-triviality of GT invariants as long as
$A^2\geq \eta(A)-1$.   And if the class $A$ is reduced
(see Section \ref{section:K-Lagrangian} for the rational case and
\cite{Li^2} the general case), one only needs easily verified
 conditions $\eta_\omega(A)\geq 2$ and  $A^2\geq \eta_\omega(A)-1$ since
$\eta_\omega(A)=\eta(A)$.

\rmktwo

\pfone[Proof of Theorem \ref{main}:] We first deal with the case of
$-1$ sphere $C$. By Proposition \ref{symplectic connectedness},
there is a Baire set $\JJ_{reg}([C])\subset \overline \JJ$ such that
for each $J \in \JJ_{reg}([C])$, there is a unique embedded
$J_{t_i}-$holomorphic sphere in the class $[C]$  for each $i$, and
$J_{\infty}$ is regular in the sense of SFT for $\overline W$.
Notice that $d([C])=0$, and since $C$ has genus $0$, its $W$-part
under neck-stretching does not admit a non-trivial unbranched cover. Therefore we can apply
Lemma \ref{U-planes are in the same family}  as in the proof of Theorem \ref{general} to produce a
$J_{t_i}-$holomorphic sphere $C_{t_i}$ intersecting $L$ minimally.
$C_{t_i}$ is symplectic isotopic to $C$ by the last statement of
Proposition \ref{symplectic connectedness}.

For a symplectic sphere $C$ with non-negative square, we follow the strategy above by  first introducing $U_i$  and $\mathcal F_{U_i}$.
By applying Remark \ref{open set} and Proposition \ref{symplectic connectedness} to $M$ and $[C]$,
there is a pair  $(\tilde J, \tilde \Omega=\{x_i\})$ with $\tilde J\in \mathcal{F}_{\{U_i\}}$, $x_i\in U_i$,
and an embedded $\tilde J-$holomorphic sphere $\tilde C$ through $\{x_i\}$ with $[\tilde C]=[C]$.
Let  $(M',J_{\{x_i\}}, \omega')$, $C_{x_i}$,  $E_i=[C_{x_i}]$,  $U'_i$,   $\mathcal F'_{U_i}$, $i=1,\cdots,d(e)$  be as in Lemma
\ref{lemma:blown-up curves, GT genericity}. The class  $A'=[C]-\sum_{1\leq i\leq d(e)}E_i$
 is represented by an $\omega'-$symplectic $-1$ sphere, for instance, the proper transform of $\tilde C$, thus Proposition
\ref{symplectic connectedness} still holds for $A'$.

Now apply Remark \ref{open set} to $M'$ and $A'$, then Proposition \ref{symplectic connectedness}
and the arguments in the first paragraph of the present subsection imply that,
 there is a Baire set $\JJ_{reg}([C])\subset \overline \JJ\cap \mathcal F_{\{U_i\}}$
 with the following property: for each $J \in \JJ_{reg}([C])$, there is a unique embedded $(J_{\{x_i\}})_{t_j}-$holomorphic sphere
$C'_{t_j}$ in the class $A'$   for each $j$,
and  $(J_{\{x_i\}})_{\infty}$ is regular in the sense of SFT for the symplectic completion of $p^{-1}(W)$.
Moreover, $C'_{t_j}$ intersects transversally with each $C_{x_i}$ at one
point.
Now, just as in the end of the proof of Theorem \ref{general}, for some $j$, $p(C'_{t_j})$
is the desired symplectic sphere in the class $A$, where $p:M'\to M$ is the complex blowing down map.
Moreover,  $p(C'_{t_j})$ is symplectic isotopic to $C$ by the last
statement of Proposition \ref{symplectic connectedness}.

\pftwo

\rmkone\label{Two Lagrangians} One easily sees that the above proof
works for finitely many Lagrangian spheres that are pairwisely
disjoint. It is not clear to the authors whether the theorem holds
when they do intersect. \rmktwo

On the other hand, by choosing subsequences succesively, one may push off certain symplectic
configurations. In particular,   the following will be used in the proof of Theorem \ref{Lagrangian isotopy uniqueness}.

\corone\label{lemma:configuration existence} Let $L$ be a Lagrangian
sphere in a symplectic $4$-manifold $(M,\omega)$, and
$D=\{A_1,\cdots,A_n\}$ a homology type of a stable spherical symplectic configuration.
If each $A_i$ pairs trivially with $[L]$. Then there is a symplectic $D-$configuration
disjoint from $L$.

\cortwo

\rmkone Further,  we expect to  be able to deform a contractible family of
symplectic spheres to be disjoint from a given Lagrangian sphere. Such a result would be useful
in proving Conjecture \ref{k-blowups} on the uniqueness up to symplectomorphism (see Remark \ref{New Connectedness Theorem} and  \ref{us}).
The family being contractible is necessary: as  pointed out to us by R.
Hind, if one takes a representative of the generator of
$\pi_1(Symp(S^2,\sigma))$, the graph of this generator as a circle family
of symplectic spheres in a monotone $S^2\times S^2$ cannot be isotoped away from
 the antidiagonal.  \rmktwo

\section{$K$-null spherical
classes when $\kappa=-\infty$}\label{section:K-Lagrangian}

It is in general difficult to determine whether a spherical class has a Lagrangian spherical representative. We are able to completely solve
 this problem for rational and ruled manifolds in Section \ref{section:existence}.
 In this section we first derive some preliminary results.

\subsection{Rational manifolds}
We fix some notations: in this section $M$ is $\CC P^2\#
n\overline{\CC P}^2$ with $n\geq 1$.
Let $\EE$ and $\LL$  be the
sets of integral homology classes represented by smoothly embedded
spheres of square $-1$ and $-2$ respectively.

An
orthogonal basis $\{H,E_1,\cdots ,E_n\}$ of $H_2(M;\ZZ)$ is called standard if  $H^2=1$ and
$E_i\in \EE$.
We fix a standard basis in this section.

Let $\KK$ be the set of symplectic canonical classes of $M$.
For any sequence $\{\delta_i\}, i=0,..., n$ with $\delta_i=0$ or $1$,
let $K_{\{\delta_i\}}$ be the Poinc\'are dual of
$$-3H+(-1)^{\delta_1} E_1+(-1)^{\delta_2}
E_2-\cdots+(-1)^{\delta_n} E_n.$$
Then $K_{\{\delta_i\}}\in \KK$. When $\delta_i=0$ for any $i$, we simply  denote
it by $K_0$, i.e.
$$K_0=PD(-3H+E_1+\cdots + E_n).$$

 \subsubsection{$\EE, \LL$, symplectic genus and $D(M)$}
We review some  facts about $\EE, \LL$, $D(M)$ and the notion of symplectic genus.

Let $D(M)$ be the geometric automorphism group of $M$, i.e.  the image of the diffeomorphism
group of $M$ in $Aut(H_2(M;\ZZ))$. We say two classes in
$H_2(M;\ZZ)$ are \textit{equivalent} if they are related by $D(M)$.

 It is shown in \cite{Li^2} that $D(M)$ is generated by a set of spherical  reflections.
For $\gamma\in H_2(M;{\mathbb Z})$
with $\gamma^2=\gamma\cdot\gamma=\pm 1$ or $\pm 2$, there is an
automorphism $R(\gamma)\in \text{Aut}(H_2(M; \mathbb Z))$  called the reflection along
$\gamma$,
$$R(\gamma)(\beta)=\beta-
\frac{2(\gamma\cdot\beta)}{\gamma\cdot\gamma}\gamma.$$
If  $\gamma\in \EE$ or $\LL$, by Proposition 2.4 in Chapter III in \cite{FM1},
 $R(\gamma)\in D(M)$, and we call it a spherical reflection.

Another fact is that   $D(M)$ acts transitively on $\KK$ (\cite{LL}).

To define the symplectic genus of  $e\in H_2(M;\ZZ)$ first introduce
the subset $\KK_e$ of $\KK$:

$$\KK_e=\{K\in\KK|\text{there is a class}\hskip 1mm \tau\in\CCC_K\hskip 1mm
\text{such that}\hskip 1mm \tau\cdot e>0\}.$$
Here $\CCC_K=\{[\omega]|\omega\text{ is a symplectic form, }K_{\omega}=K\}$ is the $K$-symplectic cone.
It is shown in \cite{LL} that
$\CCC_K$ is completely determined
by  the set of $K$-exceptional spherical classes
$$\EE_K=\{E\in\EE|K(E)=-1\}.$$
More precisely,
$$\CCC_K=\{\tau\in H^2(M;\RR)|\tau^2>0, \tau(E)>0\hskip 1mm \text{for any}\hskip 1mm E\in\EE_K \}.$$

The following is a useful observation.
\lemmaone\label{delta} If  $\xi=aH-\sum b_iE_i\in H_2(M;\ZZ)$ with  $a>0$
then $K_{\{\delta_i\}}\in \KK_\xi$.
\lemmatwo

\pfone

Notice that for any $K_{\{\delta_i\}}$, one could easily find   $\tau\in\CCC_{K_{\{\delta_i\}}}$  by
requiring  $\tau(H)\gg0$, but keeping the corresponding signs
of $E_i$ in $\tau$ opposite to that of $K_{\{\delta_i\}}$. Such a
construction follows from the easy observation that classes in
$\EE_{K_{\{\delta_i\}}}$ are obtained by changing the corresponding
signs of those in $\EE_K$ and Theorem 4 of \cite{LL}.

By possibly even enlarging $\tau(H)$ further, since $a>0$, one
could also assure that $\tau(\xi)>0$.  Therefore,
$K_{\{\delta_i\}}\in\KK_\xi$.
\pftwo

For $K\in \KK_e$ define the $K-$symplectic genus $\eta_K(e)$  to be
$\frac{1}{2}(K(e)+e^2)+1$.
Finally, the \textit{symplectic genus} of
class $e$ is defined as:

$$\eta(e)=\max_{K\in\KK_e} \eta_K(e). $$

\nono

By  Lemma 3.2 in \cite{Li^2},   $\eta(e)$ has the following
basic properties:

\enone[(1)]

\item $\eta(e)$ is no bigger than the minimal genus of $e$, and
they are both equal to $\eta_{\omega}(e)$ in \eqref{genus(e)}  if $e$ is represented by an $\omega-$symplectic surface for some symplectic form $\omega$;


\item Equivalent classes have the same $\eta$.

\entwo

Note that in \cite{Li^2} these properties are stated for
classes with positive square, but the proof actually covered all cases.

\nono We have the following assertions characterizing   $\EE$ and $\LL$ in terms of the symplectic genus,
 as well as the  action of $D(M)$ on $\EE$ and $\LL$.

\propone[\cite{Li^2}, Lemma 3.4, Lemma 3.6(2)]\label{symplectic genus
0=non-reduced}
For $e$ with $e\cdot e=-1$ or $-2$, $\eta(e)=0$ if and only if $e$ is not
equivalent to a reduced class.

Moreover, for $e$ with $e\cdot e=-1$, $\eta(e)=0$ if and only if
$e\in \EE$, Any class in $\EE$ is equivalent  to either $E_i$ or
$H-E_i-E_j$ for some $1\leq i,j\leq n$. If $n\neq 2$,  it is
equivalent to  $E_i$.

Similarly, for $e$ with $e\cdot e=-2$, $\eta(e)=0$ if and only if
$e\in \LL$. Any class in $\LL$ is equivalent  to either $E_i-E_j$ or
$H-E_i-E_j-E_k$ for some $1\leq i,j,k\leq n$. If $n\neq 3$,  it is
equivalent to  $E_i-E_j$. \proptwo

Here a class
$\xi=aH-\sum_{i=1}^n b_iE_i$ with $a\geq 0$ and $b_1\geq b_2\geq\cdots\geq b_n\geq 0$ is called \textit{reduced} (\cite{Gao}, \cite{Ki}) if
$$a\geq b_1+b_2+b_3.$$

\subsubsection{$K-$null spherical classes and $D_K(M)$}

For $K\in \KK$ let $D_K(M)$ be the isotropy subgroup of $K$ of the transitive action of $D(M)$ on $\KK$.
We say two classes are \textit{$K-$equivalent} if they are related
by $D_K(M)$.

By definition \ref {def:K-Lag spherical},  $\xi\in H_2(M;\ZZ)$ is  a
$K$-null spherical class if $\xi\in \LL$ and $K(\xi)=0$.
Hence the set of $K-$null spherical classes is denoted by $\LL_K$.

We now study the interactions of $\LL_K$ and $D_K(M)$.
Due to the transitivity of the  action of $D(M)$ on $\KK$ (c.f. \cite{LL}), we
will restrict to the case $K=K_0$ without loss of generality.

First of all,   if $\gamma\in \LL_{K_0}$, then $R(\gamma)\in D_{K_0}(M)$, and we call it a \textit{$K_0$-twist}.

Secondly, notice that $D_{K_0}$ acts on $\EE_{K_0}$ and $\LL_{K_0}$.

To go further,
we need to understand $\EE_{K_0}$.
It is clear that $E_i\in \EE_{K_0}$. Moreover, for any symplectic form $\omega$ with $K_{\omega}=K_0$,
the GT invariant of $H$ or any $E\in \EE_{K_0}$ is non-trivial.
By the positivity of intersection, we have

\lemmaone\label{-1}
Suppose $\xi=aH-\sum b_iE_i$ is in $\EE_{K_0}$, then $a\geq 0$ and $b_i\geq 0$. If $a=0$, then $\xi=E_i$ for some $i$.
\lemmatwo

It is clear that
reflections $R(E_i-E_j)$ and $R(H-E_i-E_j-E_k)$ are $K_0$-twists.
With this understood, we see that  Proposition 1.2.12 in \cite {MSE} can be stated as follows,

\propone \label{exceptional sphere classification}
 Any class in $\EE_{K_0}$ can be transformed to either
$E_i$ or $H-E_i-E_j$ for some $1\leq i,j\leq n$ via $K_0$-twists. If
$n\neq 2$,  it is $K_0-$equivalent to  $E_i$ via $K_0$-twists.
\proptwo

\nono As a consequence, we have

\corone\label{base extension} Suppose
$b^-(M)=n\geq 2$.  If $\{E'_i\}_{i=1}^k$, $k\leq n-2$, is an orthogonal subset of
$\EE_{K_0}$, then there exists $\phi\in D_{K_0}(M)$, generated by $K_0$-twists, such that
$\phi(E_i')=E_i$, $1\leq i\leq k$.
\cortwo

\pfone
The statement is vacuous if $n\leq 2$ and   easily verified for $n=3$. We apply induction on $n$.
From Proposition \ref{exceptional sphere classification}, there exists $\tilde\phi\in D_{K_0}(M)$ such that
$\tilde\phi(E'_1)=E_i$.  One then further compose the $K_0$-twist $f=R(E_i-E_1)$ so that $E_1'$ is eventually sent
to $E_1
$. Noting that
$$f(\tilde\phi(E_i'))\cdot E_1=f(\tilde\phi(E_i'))\cdot f(\tilde\phi(E_1'))=E_i'\cdot E_1'=-\delta_{1i},$$ we are reduced to
the case $n-1$ by restricting our attention to the last $(n-1)$ exceptional
classes (and $k$ is reduced by $1$ as well).
\pftwo

\rmkone Note that this is not true when $k=n-1$.  Take $n=2$. Then $H-E_1-E_2$ is not equivalent to
$E_1$ or $E_2$ since it is characteristic but $E_i$ is not.

\rmktwo

\propone \label{Diff action on H2} $D_{K_0}(M)$ is generated by $K_0$-twists.\proptwo

\pfone
For $\phi\in D_{K_0}(M)$, apply  Corollary \ref{base extension} to $\phi(E_i), 1\leq i\leq n-2$,
 there is a $K_0$-twist $f$ such that
$f(\phi(E_i))=E_i$.

Consider $F_{n-1}=f(\phi(E_{n-1}))$ and $F_n=f(\phi(E_{n}))$. $F_{n-1}$ and $F_n$ are orthogonal to $E_i, 1\leq i\leq n-2,$ since
 $f(\phi(E_j))\cdot E_i=E_j\cdot E_i=0$ for $i\leq n-2$ and $j>n-2$. It is easy to see that the only such classes in $\mathcal E_{K_0}$ are
 $H-E_{n-1}-E_n, E_{n-1}, E_n$. Since $F_{n-1}\cdot F_n=0$, it has to be that $\{F_{n-1}, F_n\}=\{E_{n-1}, E_n\}$.
  By composing $f$ with the $K_0-$twist $R(E_n-E_{n-1})$ if necessary,
one obtains the desired inverse of $\phi$ generated by $K_0$-twists, which means $\phi$ is also generated by $K_{0}$-twists.

\pftwo

We now prove an analogue of Proposition \ref{exceptional sphere classification} for $\LL_{K_0}$.  We start with

\lemmaone \label {b_i>0}
Suppose   $\xi=aH-\sum b_iE_i\in H_2(M;\ZZ)$  is in  $\LL_{K_0}$,
If  $ a>0$ then
$\eta(\xi)=\eta_{K_0}(\xi)$ and
$b_i\geq 0$.
\lemmatwo

\pfone
For any $\xi \in \LL_{K_0}$, $\eta_{K_0}(\xi)=0$ and the minimal genus is $0$ as well. By Lemma \ref{delta}, if $\xi=aH-\sum b_iE_i\in H_2(M;\ZZ)$ with $a>0$,
then $\eta_{K_{\{\delta_i\}}}(\xi)$ is defined.  Recall from the minimal genus assumption and the fact that the symplectic genus is no bigger than
the minimal genus,  $0=\eta_{K_0}(\xi)\geq \eta_{K_{\{\delta_i\}}}(\xi)$ for any
choice of $\{\delta_i\}$. But this holds only if $b_i\geq 0$ for all $i$, hence the conclusion follows.
\pftwo

Following Evans \cite{Evans}, we make the following definition.

\begin{definition}\label{binary-ternary}  A
class is called \textit{binary} if it is of the form $E_i-E_j$, and
\textit{ternary} if it is of the form $H-E_i-E_j-E_k$, $1\leq i,j,k\leq n$.
\end{definition}

Clearly, binary and ternary classes are in  $ \LL_{K_0}$.
In the rest of our paper, we denote $R(H-E_i-E_j-E_k)$ by $\gijk$ for short.

\propone\label{K-Lagrangian sphere class classification} For
$\xi\in\LL_{K_0}$, either $\xi$ is $K_0-$equivalent to a binary or
ternary class. Further, if either  $n\neq 3$, or $n=3$ but $\pm \xi\ne H-E_1-E_2-E_3$, then $\xi$
is $K_0-$equivalent to the  binary class $E_1-E_2$.\proptwo

\pfone Let  $\xi=  aH-\sum b_iE_i$.
When $a=0$ it is easy to conclude that  $\xi$ is binary.
Let $r$ be the number of nonzero $b_i$. An easy calculation
verifies the case when $r\leq 3$. Thus  we assume $r>3$
 with $a>0$ by possibly reversing the signs of $\xi$
 (simply do a reflection with respect to $\xi$).
By Lemma \ref{b_i>0}, we may assume that $b_1\geq b_2\geq\cdots\geq b_n\geq0$.

Now we write down the reflection $\gone$ explicitly:

$$\gone(\xi)=(2a-b_1-b_2-b_3)H-\sum c_iE_i,$$

\nono where $c_i=b_i$ for $i>3$.

If $2a-b_1-b_2-b_3<0$,  consider the class $-\gone(\xi)\in \LL_{K_0}$.
In this case,  the leading coefficient of $-\gone(\xi)$ is bigger than $0$.  However, since $r>3$,
one must have $-c_{r}=-b_{r}<0$, a contradiction to Lemma
\ref{b_i>0}. Thus, $2a-b_1-b_2-b_3\geq 0$

Moreover, from Lemma \ref{symplectic genus
0=non-reduced},
 $\xi$ is not reduced
hence one must have
$b_1+b_2+b_3>a$.  Combining these facts, we have

$$0\leq2a-b_1-b_2-b_3<a.$$

Also notice that $\gone(\xi)$ verifies all conditions of Lemma
\ref{b_i>0}, thus $c_i>0$ still holds.  One could then repeat the above process and use induction on the
coefficient $H\cdot\xi$ until $r\leq 3$ or $a=0$.

\pftwo

\rmkone\label{algorithm} The algorithm reducing a $K$-null spherical
classes is also valid for exceptional classes.  In this case, one
gets an explicit $K_0$-equivalence from an exceptional class to
$E_i$ when $n\geq3$ or possibly $H-E_1-E_2$ when $n=2$.  This is
also used in \cite{MSE}. \rmktwo



\subsubsection{$(K, \alpha)-$null spherical classes and $D_{K, \alpha}(M)$}

In this section we fix a class $\alpha$ in the $K-$symplectic cone $\mathcal C_K$.

\begin{definition} \label{K-alpha Lag}
 A \textit{$(K, \alpha)-$null spherical class} is a $K-$null spherical class which
pairs trivially with $\alpha$.
\end{definition}

Reflections $R(\xi)$, for $\xi$ a $(K, \alpha)-$null spherical class, are called \textit{$(K, \alpha)-$twists}.
We also define  $D_{K, \alpha}(M)$ to be the subgroup of $D_K(M)$ preserving $\alpha$. One has the following easy observation:

\lemmaone\label{phi}
If $\phi\in D_K$ then
\begin{itemize}
\item  $\phi$ induces a bijection from $\LL_{K, \alpha}$ to $\LL_{K, \phi^{-1}(\alpha)}$.
\item
$f\to \phi^{-1}\circ f\circ \phi$ defines an isomorphism from $D_{K, \alpha}$ to $D_{K, \phi^{-1}(\alpha)}$
taking $R(\xi)$ to $R(\phi(\xi))$.
\item $\alpha$ has a positive lower bound on $\EE_K$ which is attained by some $K$-exceptional class.

\end{itemize}
\lemmatwo


\nono The third assertion is a consequence of Gromov compactness and the well-known fact that, for any
$E\in\EE_K$, $GT(E)\neq0$ with respect to any symplectic form $\omega$ representing $\alpha$.  We are now ready to prove
the following:

\propone\label{homological twists}
$D_{(K_0, \alpha)}$ is generated by $(K_0, \alpha)-$twists.
\proptwo

\pfone We will use induction on $n$ and a trick due to Martin Pinsonnault \cite{Pinsonnault}.
For $n\leq 3$ this is easy to verify directly by listing all exceptional classes.

If $n\geq 3$ choose $\{E_i'\}_{i=1}^{n-2}\subset \EE_{K_0}$ such that  $E_1'$ has
minimal $\alpha$-area, and  $E_i'$  has minimal
$\alpha$-area among exceptional classes orthogonal to $E_j$ for all $j<i$.
By  Corollary \ref{base
extension}, there is $\psi\in D_{K_0}(M)$ such that $\psi(E_i')=E_i$.
By Lemma \ref{phi} we can assume that $E_i'=E_i$, so that among the basis elements
$\{H,E_1,\cdots ,E_n\}$, $E_1,\cdots, E_{n-2}$ enjoys the above minimality property.

Let $f\in D_{(K_0, \alpha)}$.  If
one could find a series of $(K_0, \alpha)-$twists such that their composition
$\phi$ satisfies $\phi\circ f (E_1)=E_1$, one can then include
$\phi^{-1}$ into our decomposition of $f$.  Since $E_1$ is orthogonal to $\phi\circ f(E_i)$ for
$i\neq 1$, one can then use induction on these classes.  Therefore it suffices to look
 for such a $\phi$ in the rest of the proof.

Notice first that
\begin{equation}\label{ijk}
\alpha(H-E_i-E_j-E_k)\geq0, \hskip2mm i>j>k.
\end{equation}

\nono This is clear from the construction: since the $K_0$-exceptional class $(H-E_i-E_j)\cdot
E_l=0$, for all $l<k$ and $k\leq n-2$, we have $\alpha(H-E_i-E_j)\geq\alpha(E_k)$


Assume
$f(E_1)=aH-\sum b_{r_i}E_{r_i}$. Notice that $f(E_1)\in \EE_{K_0}$ and $\alpha(f(E_1))=\alpha(E_1)$. If $a=0$ then
  $f(E_1)=E_k$
for some $k$ and
$E_1-E_k \in \LL_{K_0, \alpha}$. In particular, $R(E_1-E_k)\in D_{K_0, \alpha}$ and we can choose $\phi=R(E_1-E_k)$.

If $a\ne 0$, by Lemma \ref{-1}, $a>0$ and $b_i\geq 0$.
 Suppose  $b_{r_1}\geq
b_{r_2}\geq\cdots\geq b_{r_n}\geq 0$.   Now apply $\Gamma_{r_1r_2r_3}$,
$$\Gamma_{r_1r_2r_3}(f(E_1))=f(E_1)+(a-b_{r_1}-b_{r_2}-b_{r_3})(H-E_{r_1}-E_{r_2}-E_{r_3})$$
\nono From Lemma \ref{symplectic genus 0=non-reduced},
$a-b_{r_1}-b_{r_2}-b_{r_3}<0$. By \eqref{ijk}, $\alpha(H-E_{r_1}-E_{r_2}-E_{r_3})\geq 0$, thus

$$\alpha(E_1)=\alpha(f(E_1))\geq\alpha(\Gamma_{r_1r_2r_3}(f(E_1))).$$
By the choice of $E_1$, we must have $\alpha(H-E_{r_1}-E_{r_2}-E_{r_3})= 0$. This means that
$H-E_{r_1}-E_{r_2}-E_{r_3}\in \LL_{K_0, \alpha}$ and $\Gamma_{r_1r_2r_3}\in D_{K_0, \alpha}(M)$.

Now from Remark \ref{algorithm}, by repeating the above operations
we eventually have an equivalence between $E_1$ and $E_k$ for some
$k$.
 Denote their
composition to be $\tilde{\phi}$.

If $k=1$ we let $\phi=\tilde \phi$. If $k\ne 1$, then $\alpha(E_k)=\alpha(E_1)$ and
we let $\phi=R(E_1-E_k)\circ \tilde \phi$.

\pftwo
\subsection{Irrational ruled manifolds}
 It is clear that a
minimal symplectic irrational ruled manifold does not admit any Lagrangian spheres. Thus,
in this subsection, $M=(\Sigma_h\times S^2)\# n\overline{\CC P}^2$.
Any non-minimal genus $h$ ruled manifold is of this form.
Define $\EE, \LL, \KK, D(M)$ as above. For $K\in \KK$ also define $D_K(M), \EE_K, \LL_K$ and $K-$null spherical class as above.

A standard homology basis consists of $\{T,F,E_1,\cdots,E_n\}$,
with the following algebraic properties:
\begin{equation} \label{sb} T\cdot F=1, \quad  T^2=F^2=T\cdot E_i=F\cdot E_i=0,  \quad E_i^2=-1, \quad 1\leq i\leq n.
\end{equation}
Geometrically, $T$
is  represented by a surface with
genus $h$, $F$ the class of a fiber, and  $\{E_i\}$ a maximal collection of orthogonal  exceptional classes in $\EE$.
The standard canonical class is then $K_0=PD(-2T+(2h-2)F+\sum E_i)$.

$D(M)$ is characterized as the subgroup of $Aut (H_2(M;\ZZ))$ preserving $F$ up to sign (\cite{FM1}).
Due to  the transitive action of $D(M)$ on
$\mathcal{K}$ shown in \cite{LL}, we may
again restrict to the case $K_{\omega}=K_0$.

\begin{lemma}\label{rule1}
$\EE_{K_0}=\{E_i, F-E_i, i=1,..., n\}$.

$\LL_{K_0}=\{\pm (F-E_i-E_j), \pm (E_i-E_j), 1\leq i<j \leq  n\}$.

\end{lemma}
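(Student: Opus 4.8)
The plan is to prove each of the two equalities by the two inclusions ``$\supseteq$'' and ``$\subseteq$''; the first is a list of explicit embedded spheres, and the second is a short lattice computation once one observes that any smooth sphere class is orthogonal to the fibre class $F$.

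For ``$\supseteq$'': $E_i$ is represented by the $i$-th exceptional sphere, $F-E_i$ by the proper transform of a fibre through the $i$-th blow-up centre, $E_i-E_j$ by tubing the $i$-th exceptional sphere to the orientation-reversal of the $j$-th one, and $F-E_i-E_j$ by the proper transform of a fibre through two blow-up centres; these are embedded spheres of squares $-1,-1,-2,-2$. Since $K_0=\mathrm{PD}(-2T+(2h-2)F+\sum_k E_k)$ gives $K_0(F)=-2$ and $K_0(E_i)=-1$, we get $K_0(E_i)=K_0(F-E_i)=-1$ and $K_0(E_i-E_j)=K_0(F-E_i-E_j)=0$; reversing orientation on a sphere negates both the class and its $K_0$-pairing, so $-(E_i-E_j)$ and $-(F-E_i-E_j)$ are $K_0$-null spherical as well. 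Hence the listed classes lie in $\EE_{K_0}$, resp.\ $\LL_{K_0}$.

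For ``$\subseteq$'': let $\xi=aT+bF+\sum_i c_iE_i$ be in $\EE_{K_0}$ or $\LL_{K_0}$, so $\xi$ is represented by a smoothly embedded sphere. The natural map $p\colon M\to\Sigma_h$ (projection on the $\Sigma_h\times S^2$ part, each $\overline{\CC P}^2$-summand mapped into a contractible disk of $\Sigma_h$) induces $p_*\colon H_2(M;\ZZ)\to H_2(\Sigma_h;\ZZ)\cong\ZZ$ with $p_*T=1$, $p_*F=0$, $p_*E_i=0$; since $h\geq1$ every sphere class lies in $\ker p_*$ (a $2$-sphere in $M$ maps to a null-homotopic $2$-sphere in $\Sigma_h$). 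Hence $a=0$, $\xi=bF+\sum_i c_iE_i$, and $\xi^2=-\sum_i c_i^2$. If $\xi\in\EE_{K_0}$ then $\sum_i c_i^2=1$, so $\xi=bF\pm E_i$, and $K_0(\xi)=-2b\mp1=-1$ forces $\xi=E_i$ (sign $+$, $b=0$) or $\xi=F-E_i$ (sign $-$, $b=1$). If $\xi\in\LL_{K_0}$ then $\sum_i c_i^2=2$, so $\xi=bF+\epsilon_1 E_i+\epsilon_2 E_j$ with $i<j$ and $\epsilon_1,\epsilon_2\in\{\pm1\}$, and $K_0(\xi)=0$ reads $2b=-\epsilon_1-\epsilon_2$, whose solutions give $\xi=\pm(E_i-E_j)$ (opposite signs, $b=0$), $\xi=F-E_i-E_j$ (both $-$, $b=1$), or $\xi=-(F-E_i-E_j)$ (both $+$, $b=-1$). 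These are exactly the listed classes, completing the proof.

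The only step carrying any content is the orthogonality $\xi\cdot F=0$ for smooth sphere classes, which is precisely where $h\geq1$ enters; I expect the fussiest point to be the (routine) construction of $p\colon M\to\Sigma_h$ and the verification of $p_*$ on the standard basis, which could alternatively be replaced by citing the standard fact that a smoothly embedded sphere in an irrational ruled surface meets the fibre class trivially. Everything else is the elementary arithmetic above together with standard embedded-sphere constructions in a blow-up.
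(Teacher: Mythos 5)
Your proposal is correct and follows essentially the same route as the paper: the paper's entire argument is that the coefficient of $T$ vanishes because a sphere admits no nonzero-degree map to a positive-genus curve, after which the identification of $\EE_{K_0}$ and $\LL_{K_0}$ is the elementary lattice computation you carry out. Your explicit construction of the projection $p$ and the listing of geometric representatives merely fill in details the paper leaves as "easy to determine."
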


\begin{proof}
First of all, if  $\xi=aT+bF+\sum c_iE_i$ is represented by a
sphere, then  $a=0$. This follows from the fact that a
sphere does not have a nonzero degree map to  a positive genus
curve.

With this understood, it is easy to determine $\EE_{K_0}$ and $\LL_{K_0}$ using \eqref{sb}.

\end{proof}

For $\alpha\in \CCC_{K_0}$ we define $D_{K_0, \alpha}$, $(K_0, \alpha)-$twist as before.

\begin{lemma}\label{rule2}
$D_{K_0, \alpha}$ is generated by $(K_0, \alpha)$ twists.
\end{lemma}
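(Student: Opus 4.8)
The plan is to mimic the rational case (Proposition \ref{homological twists}), but the structure here is considerably simpler because of the explicit description of $\EE_{K_0}$ and $\LL_{K_0}$ in Lemma \ref{rule1}. First I would note that any $\phi\in D_{K_0,\alpha}$ preserves $F$ (since $F$ is the unique primitive class with $K_0\cdot F=0$, $F^2=0$ lying in the closure of the symplectic cone — alternatively $\phi$ preserves $F$ up to sign by the characterization of $D(M)$, and the sign is fixed by positivity against $\alpha$), and it permutes the finite set $\EE_{K_0}=\{E_i, F-E_i\}$ while preserving $\alpha$-areas. Since $\alpha(E_i)+\alpha(F-E_i)=\alpha(F)$ is the same for all $i$, the map $\phi$ induces a permutation of the index set $\{1,\dots,n\}$ together with a choice, for each $i$, of whether $E_i\mapsto E_{\sigma(i)}$ or $E_i\mapsto F-E_{\sigma(i)}$, and these choices must be compatible with $\alpha$.

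The key step is then to realize each such move by $(K_0,\alpha)$-twists. Swapping $E_i$ and $E_j$ when $\alpha(E_i)=\alpha(E_j)$ is achieved by the reflection $R(E_i-E_j)$, which is a $(K_0,\alpha)$-twist precisely because $E_i-E_j\in\LL_{K_0}$ pairs trivially with $\alpha$. Replacing $E_i$ by $F-E_i$ when $\alpha(E_i)=\alpha(F-E_i)$ is achieved by $R(E_i-(F-E_i))=R(2E_i-F)$; here I should check that $2E_i-F$ is, up to sign, the class $F-E_i-E_i$ — but that is not of the stated form, so instead I would use the composition $R(F-E_i-E_j)\circ R(E_j-E_i)$-type tricks, or more directly observe that conjugating $R(E_i-E_j)$ appropriately, or using $R(F-E_i-E_j)$ which lies in $\LL_{K_0}$: one computes $R(F-E_i-E_j)$ swaps $E_i\leftrightarrow F-E_j$ and $E_j\leftrightarrow F-E_i$. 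So when $\alpha(F-E_i-E_j)=0$, this reflection is a $(K_0,\alpha)$-twist effecting exactly such an exchange. The remaining task is the bookkeeping lemma: given $\phi\in D_{K_0,\alpha}$, one peels off $E_1$ first — by the minimality/compatibility of $\alpha$-areas, $\phi(E_1)$ is either some $E_k$ or some $F-E_k$ with $\alpha(E_k)=\alpha(E_1)$ or $\alpha(F-E_k)=\alpha(E_1)$, and in either case one of the above twists sends it back to $E_1$ — then induct on $n$ by restricting to the sublattice orthogonal to $E_1$ (and to $F$), exactly as in Corollary \ref{base extension} and Proposition \ref{homological twists}.

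I expect the main obstacle to be the careful verification that the elementary exchanges of the basis $\{E_i, F-E_i\}$ generated by a general $\phi\in D_{K_0,\alpha}$ can all be written in terms of the two families of reflections $R(E_i-E_j)$ and $R(F-E_i-E_j)$ (i.e. that these generate the relevant ``signed permutation'' subgroup), together with checking that each reflection actually used is $\alpha$-preserving — i.e. that the relevant class in $\LL_{K_0}$ genuinely pairs to zero with $\alpha$, which follows because $\phi$ preserves $\alpha$-areas so the two classes being exchanged have equal $\alpha$-area, forcing their difference (or $F$ minus their sum) to be $\alpha$-null. Compared with the rational case there is no analogue of the ternary-class subtlety forcing an inductive area argument, so once the generation statement for signed permutations is pinned down the proof is routine; I would likely present it tersely, citing the parallel with Proposition \ref{homological twists}.
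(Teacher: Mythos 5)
Your proposal follows essentially the same route as the paper: induction on $n$ using the minimal-$\alpha$-area exceptional class, with the reflections $R(E_i-E_j)$ and $R(F-E_i-E_j)$ realizing the required signed permutations of $\EE_{K_0}=\{E_i,\,F-E_i\}$. The only detail to pin down is the flip case $\phi(E)=F-E$: no single twist returns $F-E$ to $E$, and the paper uses the composition $R(E'-E)\circ R(F-E'-E)$ together with the observation that $2\alpha(E)=\alpha(F)$ plus minimality force \emph{all} exceptional classes to have equal area, so the auxiliary twists are indeed $\alpha$-preserving — which is the point your ``minimality/compatibility'' remark gestures at but does not fully verify.
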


\pfone
As in the rational manifold case,  we  do induction on $n=b^-(M)+1$.

When $n=1$, since $\phi(F)=\pm F$, it is easier to see that $D_{K_0}$, and hence $D_{K_0, \alpha}$, is trivial.

In general when $n\geq 2$, for $\phi \in D_{K_0, \alpha}$ we consider its action on $\EE_{K_0}$.
Let $E$ be the exceptional class with minimal
$\alpha$ area, the induction is immediate if
$\phi(E)\cdot E=0$, in which case we simply compose $\phi$ with  the $(K_0, \alpha)-$
twist  $R(E-\phi(E))$ to reduce to a lower $n$ case.

Otherwise,   $\phi(E)=F-E$ by Lemma \ref{rule1}.  In this case $2\alpha(E)=\omega(F)$.
Since two classes $A$ and $F-A$ are either both
in $\EE_{K_0}$ or neither, the minimality of $\alpha(E)$ forces all other exceptional spheres
to have the same area as $E$.   Since $n\geq 2$, it is clear that one
could send $F-E$ back to $E$ via a composition of $(K_0, \alpha)-$twists, for example,  the $(K_0, \alpha)-$ twist $R(E'-E)$
followed by  $R(F-E'-E)$,  where $E'$ is another
exceptional standard basis element orthogonal to $E$.
Again we are able to reduce to a lower $n$ case.

\pftwo

\section{Lagrangian spherical classes when $b^+=1$}\label{section:Lagrangian spherical}

Theorem \ref{general} allows us to effectively apply a
Lagrangian-relative version of inflation procedure in this section.
Together with
Proposition
\ref{K-Lagrangian sphere class classification}, this enables us to
classify
 Lagrangian spherical classes
 in symplectic  4-manifolds with $\kappa=-\infty$.
We also give the proof of Theorem \ref{homological action theorem} in  \ref{subsection:homological action}.

\subsection{Lagrangian relative inflation}

The inflation procedure was first introduced by Lalonde
\cite{Lalonde} and proved useful in many fundamental problems in
symplectic geometry (see \cite{LM2} for example).

The inflation construction in \cite{Lalonde},  together with Theorem \ref{general},
gives

\lemmaone[{\bf Inflation Lemma}] \label{inflation}
Let $L$ be a Lagrangian sphere in a symplectic 4-manifold with $b^+=1$.
Let $A$ be a class in $H_2(M;\ZZ)$ satisfying the condition in Theorem \ref{general}.   Assume also that $A\cdot [L]=0$.
 Then  there is a
 closed form $\rho$  on $M$ in class $PD(A)$ supported away from $L$ so that

$$\beta_t=\omega+t\rho, \hskip 3mm t\geq 0,$$

\nono is symplectic. In particular, $L$ remains Lagrangian for any $\beta_t$.

\lemmatwo

The proof is straightforward: note in \cite{Lalonde}, $\rho$ is supported near a symplectic surface in
class $A$.  Therefore, if such a symplectic surface is disjoint from
the given Lagrangian sphere $L$, $L$ remains Lagrangian in the course of
the inflation procedure. Now Theorem \ref{general} provides the desired
symplectic surface.

We first apply Lemma \ref{inflation} to study  symplectic ball
embeddings in the complement of a Lagrangian sphere.
P. Biran and O. Cornea
 studied Lagrangian relative  embeddings in  \cite{BC} (called \textit{mixed
packing} there),
where the size of maximal ball embeddings is found in some cases.

In our case of a Lagrangian sphere $L$ in a symplectic 4-manifold with $b^+=1$,  Lemma \ref{inflation} enables us to show that    packing problems in the complement of
$L$  can often be answered in the same way as for the ordinary packing problems.
Here is one example.
Biran showed in \cite{BiranS} that in any closed symplectic $4$-manifold
with an integral symplectic form,
the symplectic packing problem is stable via inflation on a Donaldson hypersurface.
For a symplectic 4-manifold $(M, \omega)$
with $b^+=1$ and  $\omega$ integral, the class $n[\omega]$ for $n$ large  satisfies the conditions in Theorem \ref{general} for
an arbitrary given Lagrangian sphere.
Thus Lemma \ref{inflation} can  be applied to such a class and hence Biran's stability result is also valid for $M\backslash L$.

\rmkone\label{New Connectedness Theorem}
It would be useful to prove
the following parameterized version of Lemma \ref{inflation}, which would be the analogue of  Lemma 1.1 in
\cite{MIsotopy}:
Given a path $\omega_t$, $0\leq t\leq 1$, of symplectic forms on $M$ with $b^+=1$ and
a sphere $L$ Lagrangian for  each $\omega_t$.
Let $A$ be a class in $H_2(M;\ZZ)$ satisfying the conditions in Theorem \ref{general}.   Assume also that $A\cdot [L]=0$.
Then there is a path
$\rho_t$ of closed forms on $M$ in class $PD(A)$ supported away from $L$ so that

$$\beta_t=\omega_t+\kappa(t)\rho_t,\hskip 3mm 0\leq t\leq 1,$$

\nono is symplectic whenever $\kappa(t)\geq 0$. In particular, $L$ remains Lagrangian for any $\beta_t$.

Lemma 1.1 in \cite{MIsotopy} is used to show that the ball embedding space
$$E_{\bar{\lambda},k}(M,\omega)=\{\psi| \psi:\coprod_{i=1}^k(B_4(\lambda_i),\omega_{std})\hookrightarrow (M,\omega)\}$$
with $\bar{\lambda}=(\lambda_1,\ldots,\lambda_k)$, is connected.
Substituting   Lemma 1.1 in \cite{MIsotopy} by its $L$ relative version as above  in appropriate places, we would be able to    obtain
the connectedness of the relative ball embedding space.

\rmktwo

\subsection{Existence of Lagrangian spheres}\label{section:existence}
In this subsection we present the proof of Theorem \ref{Lagrangian
sphere class classification}. We begin with some general discussions of
Lagrangian spheres in  a non-minimal symplectic 4-manifold with $b^+=1$.

\subsubsection{Non-minimal 4-manifolds with $b^+=1$ and $\kappa\geq 0$}\label{existence in other manifolds}

We begin with the following two persistence results.

\lemmaone\label{1-blowup}
Let $( M,\omega)$ be a symplectic 4-manifold with
$b^+(M)=1$, $[\omega]\in H^2(M;\QQ)$. Let $(\overline M, \overline \omega)$ be the  one point blow up of $(M, \omega)$  with size $a$,
and $\iota:H_2(M;\ZZ)\to H_2(\overline M, \ZZ)$  the canonical injection.
If $L\subset (M, \omega)$ is a Lagrangian sphere, then there is a Lagrangian sphere in $(\overline M, \overline \omega)$ in the class $\iota([L])$.

\lemmatwo

\pfone By the uniqueness of blow ups (Corollary 1.3 in \cite{MIsotopy}), we can place the ball of size $a$ anywhere in $(M, \omega)$. If the ball is
  disjoint from $L$, we are done. Otherwise, first choose a  ball of size $a'<a$ and disjoint from $L$, we obtain a  blow up $(\overline M, \overline \omega')$ with a Lagrangian $\bar L$ from $L$.
Let  $p:\overline M\rightarrow M$ be a topological blow down map which contracts the exceptional sphere.   Consider the class $\beta_{l, \delta}= l([p^*\omega]-(a+\delta)PD(E))$ for $\delta>0$.
Clearly, $\beta_{l, \delta}([\bar L])=0$.
Since the $K_{\overline \omega}-$symplectic cone $\CCC_{K_{\overline \omega}}$ is open, we can  assume that $\beta_{l, \delta}$ is in
 $\CCC_{K_{\overline \omega}}$ by choosing $\delta$ small.
If  $a+\delta$ is further assumed to be a rational number, then there exists $l\in\ZZ^+$ such that $\beta_{l, \delta}$ satisfies the conditions in Lemma \ref{inflation}.
Applying  Lemma \ref{inflation} to such a $\beta_{l, \delta}$ and $\bar L$,
we find that $\bar L$ remains Lagrangian in $(\overline M, \overline \omega'')$, where $\overline \omega''$ is a symplectic form  in the class $[p^*\omega]-aPD(E)$ up to a rescale.
The proof is finished by again invoking the uniqueness of blow ups.
\pftwo

If $E$ is the class of the exceptional sphere, this lemma can be  viewed as the persistence of Lagrangian spheres under a symplectic deformation on $\overline M$ in the $E$ direction, which can also be proved via the inflation construction along a symplectic surface with negative self intersection
 as in \cite {LU}.

\lemmaone \label{persistence} Let $(\overline M,\overline \omega)$ be a symplectic 4-manifold with
$b^+(\overline M)=1$, $[\overline \omega]\in H^2(\overline M;\QQ)$.  If there are two orthogonal exceptional classes $E_1$, $E_2\in \EE_{\omega}$  with
equal symplectic area $a$, then there is a Lagrangian sphere in the binary class $E_1-E_2$.
 \lemmatwo

\pfone
Let us first consider a local model: the two point blow up of  a standard ball  with equal size $t>0$. This can be identified with
the complement of a line in $\CC P^2\# 2\overline{\CC P}^2$ with a symplectic form $\tau$  with $[\tau]=PD(H-t E_1 -t E_2)$.
  Notice that $(\CC P^2\# 2\overline{\CC P}^2, \tau)$ is symplectomorphic to
a one point blow up of a monotone $S^2\times S^2$ with size $1-2t$.  If we apply
Lemma \ref{1-blowup} to the antidiagonal $L_a$ in this monotone $S^2\times S^2$,
we find a Lagrangian sphere in $(\CC P^2\# 2\overline{\CC P}^2, \tau)$ in the class $E_1-E_2=\iota([L_a])$.  In addition, such a Lagrangian sphere can be made disjoint from an embedded
$H$-class sphere in $\CC P^2\# 2\overline{\CC P}^2$ by Theorem \ref{main}.  We therefore obtain a Lagrangian sphere in our local model.

In general, let $(M, \omega)$ be obtained by symplectically blowing down two disjoint spheres in $E_1$ and $E_2$ in $(\overline M,\bar \omega)$ and adopt notations
in Lemma \ref{1-blowup}.
We shrink both balls corresponding to $E_1$ and $E_2$ to size $\epsilon\ll 1$.  By the uniqueness of ball-embeddings (in case of absence of a Lagrangian sphere, see Remark \ref{New Connectedness Theorem}),
we may place the two tiny balls $V_1$ and $V_2$ in a Darboux chart.  Our local model analysis above ensures that there is a Lagrangian sphere $L$ in the blow-up of the chart around $V_1$ and $V_2$.
Consider the class $B_b=PD(p^*\omega)-bE_1-bE_2$ where $b$ is a positive  rational number slightly larger than $a=\overline \omega(E_i)$, $i=1,2$.
Since the $K_{\overline \omega}-$symplectic cone $\CCC_{K_{\overline \omega}}$ is open, we can further assume that $PD(B_b)$ is in
 $\CCC_{K_{\overline \omega}}$.  Clearly,  $B_b\cdot (E_1-E_2)=0$.
Thus for some large integer $l_b$, $l_bB_b$ satisfies the conditions in Lemma \ref{inflation}.  Now the conclusion follows
from inflating along a symplectic surface in class $B_b$ as in the proof of Lemma \ref{1-blowup}.

\pftwo

\corone\label{b+=1 existence} Suppose $(M,\omega)$ is a minimal
symplectic manifold with $b^+=1$, $[\omega]\in H^2(M,\QQ)$. Suppose
$(\overline M,\bar{\omega})$ is a $k$ point  symplectic blow-up of $(M, \omega)$ with $E_i, i=1,..., k,$ the corresponding exceptional class, and the
canonical injective map is denoted as: $\iota:H_2(M;\ZZ)\to
H_2(\overline M;\ZZ)$. Then $\xi\in H_2(\overline M;\ZZ)$ is a Lagrangian
spherical class if
\begin{enumerate}[(1)]
\item either $\xi\in Im(\iota)$ and $\iota^{-1}(\xi)$ is Lagrangian
spherical,
\item or $\xi=E_i-E_j$ for some $i, j$, i.e. $\xi$ is binary, and $\omega(\xi)=0$.
\end{enumerate}

If  $\kappa(M)\geq 0$, these are the only Lagrangian spherical classes of $(\overline M, \overline \omega)$.
\cortwo

\pfone (1) and (2)  follow directly from Lemmas \ref{1-blowup} and \ref{persistence} respectively.

To show these are the only Lagrangian spherical classes when $\kappa(M)\geq 0$,
suppose $\xi=\xi'-\sum_{i=1}^k a_iE_i$ is
represented by a Lagrangian sphere $\bar L$, where $\xi'\in Im(\iota)$.

If $a_i=0$ for all $i$, then apply Theorem \ref{main} to find disjoint exceptional spheres in the classes $E_i$, which are also disjoint from
$\bar L$. This shows that $\xi'$ is a Lagrangian spherical class of $(M, \omega)$.

Now assume  some $a_i\neq0$.
The reflection
$R(\xi)$ thus sends $E_1$ to $a\xi'-\sum_{i>1} a_iE_i-(a_1^2-1)E_1$.
Such a class is an exceptional class of $(\overline M, \bar \omega)$.  However, from
the uniqueness of the minimal model for symplectic manifolds with $\kappa\geq 0$ (\cite{Mcduff}), $a\xi'-\sum_{i>1}
a_iE_i-(a_1^2-1)E_1=E_j$ for some $j$.  This shows $\xi'=0$  and $\xi$ is indeed binary.

\pftwo

\subsubsection{Rational manifolds}

\pfone[Proof of Theorem \ref{Lagrangian sphere class
classification}, rational manifold case:] The case of $S^2\times
S^2$ is well-known and so we focus on blow-ups of $\CC P^2$ below.

Due to  the transitive action of $D(M)$ on
$\mathcal{K}$ mentioned in Section \ref{section:K-Lagrangian},
and using definition \ref{K-alpha Lag}, we are reduced to prove the following Proposition.

\begin{prop}
Suppose $M=\CC P^2\# n \overline{\CC P}^2$ with $\{H, E_1, \cdots, E_n\}$  a standard basis, and $\omega$ is a symplectic form
with $K_{\omega}=K_0=PD(-3H+E_1+\cdots+ E_n)$. Then $\xi\in H_2(M;\ZZ)$ is represented by a Lagrangian sphere if and only if
$\xi$ is $(K_0, [\omega])-$null spherical.
\end{prop}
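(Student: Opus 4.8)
One direction is immediate: if $\xi$ is represented by a Lagrangian sphere $L$, then $\xi^2=-2$ by the Weinstein neighborhood theorem, $K_\omega(\xi)=0$ since $L$ is null-homologous in $T^*S^2$ and the canonical class pairs with it through the normal bundle (equivalently, by the adjunction formula applied to the Lagrangian), and $\omega(\xi)=0$ because $\omega$ restricts to zero on $L$; so $\xi$ is $(K_0,[\omega])$-null spherical. The substance is the converse, and the strategy is to combine the normal-form classification from Section \ref{section:K-Lagrangian} with the persistence results of \ref{existence in other manifolds}. Concretely, suppose $\xi$ is $(K_0,[\omega])$-null spherical, i.e. $\xi\in\LL_{K_0}$ and $[\omega](\xi)=0$. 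By Proposition \ref{K-Lagrangian sphere class classification}, $\xi$ is $K_0$-equivalent to a binary class $E_i-E_j$ or (only when $n=3$) to the ternary class $H-E_1-E_2-E_3$. By Proposition \ref{homological twists}, the equivalence can be realized by a composition of $(K_0,[\omega])$-twists, i.e. by reflections $R(\eta)$ along $(K_0,[\omega])$-null spherical classes $\eta$.

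Next I would promote this homological statement to a geometric one. Each $(K_0,[\omega])$-twist is the reflection along a class $\eta$ with $\eta^2=-2$, $K_0(\eta)=0$, $[\omega](\eta)=0$; by induction on the length of the word, I want to know each such $\eta$ encountered is actually represented by a Lagrangian sphere, so that $R(\eta)=(\tau_{L_\eta})_*$ is the homological action of an honest symplectomorphism (the Lagrangian Dehn twist). Then pulling back a Lagrangian sphere in the normal-form class $E_i-E_j$ (respectively $H-E_1-E_2-E_3$) by the corresponding symplectomorphism gives a Lagrangian sphere in $\xi$. So it remains to (a) produce a Lagrangian sphere in the binary class $E_i-E_j$ whenever $[\omega](E_i-E_j)=0$, which is exactly Lemma \ref{persistence} (the two exceptional classes have equal $\omega$-area), and (b) handle the ternary case $n=3$. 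For (b), $H-E_1-E_2-E_3$ with zero $\omega$-area describes the complement of a conic in a three-point blow-up, and in the monotone case this is Seidel's/Evans' Lagrangian sphere; the non-monotone versions follow by deforming the symplectic form keeping the relevant areas zero and invoking Lemma \ref{inflation} (Lagrangian-relative inflation) along a symplectic surface disjoint from the sphere, exactly as in the proof of Lemma \ref{persistence}. The induction is closed because the intermediate reflections in Proposition \ref{homological twists} are themselves binary or ternary $(K_0,[\omega])$-null classes, so the same two existence lemmas apply to them.

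The main obstacle is the geometric realization step (a)--(b) together with making the "transport by a symplectomorphism" argument honest: one must ensure that whenever $\xi$ is $K_0$-equivalent to $E_i-E_j$ by a word in $(K_0,[\omega])$-twists, the class $E_i-E_j$ itself has zero $\omega$-area (so Lemma \ref{persistence} applies) and each twist in the word is realized by a genuine Dehn twist along a Lagrangian sphere with zero $\omega$-area. The area bookkeeping is the delicate point: a $(K_0,[\omega])$-twist preserves $[\omega]$, hence preserves the $\omega$-area of every class, so $[\omega](E_i-E_j)=[\omega](\xi)=0$ is automatic, and by the inductive hypothesis every reflection class in the word is Lagrangian spherical — this is precisely why the refinement of Proposition \ref{K-Lagrangian sphere class classification} to $(K_0,[\omega])$-twists in Proposition \ref{homological twists} is needed rather than just $K_0$-twists. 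A secondary technical point is that in the non-monotone ternary case one needs the symplectic form in the local conic-complement model to be deformable to the given one through forms for which the sphere stays Lagrangian and some large multiple of a suitable class satisfies the hypotheses of Theorem \ref{general}; this is handled by opening up the $K_{\omega}$-symplectic cone and rescaling to integral classes exactly as in Lemmas \ref{1-blowup} and \ref{persistence}, and by the uniqueness of ball embeddings to reduce to a Darboux-chart model.
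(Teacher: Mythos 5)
Your necessity direction and your two existence ingredients (Lemma \ref{persistence} for binary classes with equal-area exceptional spheres, plus a separate treatment of the ternary class when $n=3$) are essentially the right ones, but the reduction of a general $\xi$ to the binary normal form has a genuine gap. You assert that ``by Proposition \ref{homological twists}, the equivalence can be realized by a composition of $(K_0,[\omega])$-twists.'' That proposition says something different: it decomposes an element of the stabilizer $D_{K_0,[\omega]}$ --- a map \emph{already known} to preserve $[\omega]$ --- into $(K_0,[\omega])$-twists. The equivalence supplied by Proposition \ref{K-Lagrangian sphere class classification} is merely an element of $D_{K_0}(M)$, built from reflections $\Gamma_{ijk}$ whose classes $H-E_i-E_j-E_k$ generically have nonzero $\omega$-area, and there is no reason it should preserve $[\omega]$. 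Worse, the statement you actually need --- that $D_{K_0,[\omega]}$ carries every $(K_0,[\omega])$-null spherical class to a binary one --- is false. Take $n=6$ and $\xi=2H-E_1-\cdots-E_6$, which lies in $\LL_{K_0}$ (for instance $\Gamma_{145}\circ\Gamma_{123}$ sends it to $E_1-E_6$), and let $PD([\omega])=aH-\sum b_iE_i$ be generic on the hyperplane $2a=\sum b_i$ near the monotone point. Then $\omega(\xi)=0$, but $\pm\xi$ are the only $(K_0,[\omega])$-null spherical classes, so $D_{K_0,[\omega]}=\{1,R(\xi)\}$ and the orbit of $\xi$ under $(K_0,[\omega])$-twists is $\{\pm\xi\}$, which contains no binary class. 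Your induction never gets started for this $\xi$, even though the Proposition correctly asserts that $\xi$ is Lagrangian spherical.

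The repair is to give up on symplectomorphisms at this stage, which is what the paper does: realize the $K_0$-equivalence of Proposition \ref{Diff action on H2} by an honest diffeomorphism $\phi$ and replace $\omega$ by $(\phi^{-1})^*\omega$, which still has canonical class $K_0$ and for which $\phi_*\xi=E_1-E_2$ still has zero area. Since the Proposition is being proved for \emph{all} symplectic forms with $K_\omega=K_0$ simultaneously, it suffices to treat the binary class for the pulled-back form; this is done by blowing down disjoint exceptional spheres in the classes $E_i$, $i\geq 3$, invoking the two-point local model of Lemma \ref{persistence}, and blowing back up via Lemma \ref{1-blowup}. For the ternary class with $n=3$, the paper blows up one further point so that $n=4$ and $\iota(\xi)$ becomes equivalent to a binary class, then uses the minimal-intersection theorems to push an exceptional sphere in $E_4$ off the resulting Lagrangian and blows it down; your sketch via deformations of the conic-complement model is vaguer, though that is not where the essential difficulty lies. (Proposition \ref{homological twists} is used in the paper for Theorem \ref{homological action theorem}, not for this existence statement.)
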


\pfone
The
conditions are clearly necessary. In the
case $n=2$, up to sign, the only $K_0-$null spherical class is the binary class $\xi=E_1-E_2$.
And if $\xi$ is $(K_0, [\omega])-$null spherical, then $E_1$ and $E_2$ must have equal symplectic area.
 Thus the existence of a Lagrangian sphere  has been  argued  in the first paragraph of Lemma \ref{persistence}.

Let us then suppose  that $n>3$.
One notices that in this case
$\xi$ can also be assumed to be binary. This is because, from Proposition
\ref{Diff action on H2},  there is a self-diffeomorphism $\phi$ of $M$, which induces a $K_0$-twist on homology and
sends $\xi$ to a binary class,
 and we could just consider $\phi_*(\xi)$ in $(M,(\phi^{-1})^*\omega)$.  Without loss of generality
we could further assume $\xi=E_1-E_2$. If $\omega(\xi)=0$, then, up to scaling,  $PD([\omega])=3H-\sum b_iE_i$ with $b_1=b_2=b>0$.

Blowing down a collection of disjoint exceptional spheres in the classes $E_i$ with $i\geq 3$, we obtain
$M'=\CC P^2\# 2\overline{\CC P}^2$ with a symplectic form $\omega'$ in the class $[\omega]=PD(3H-bE_1-bE_2)$.
As just shown, there is a Lagrangian sphere $L\subset (M', \omega')$ in the class $E_1-E_2$.
Now apply Lemma \ref{1-blowup} to obtain the desired  Lagrangian sphere back in $(M, \omega)$ by performing $n-2$ blow-ups.

Finally let us suppose that  $n=3$.  A $K_0-$null spherical class is either binary or the ternary class $\xi=H-E_1-E_2-E_3$.
The binary case can be treated in the same way as in the case $n>3$.  So let us assume that
$\xi=H-E_1-E_2-E_3$.  Let $(\bar M, \bar \omega)$ be a one point  blow up of $(M, \omega)$, $E_4$ the new exceptional class, and $\iota$ the canonical map. Notice that $b^-(\bar M)=4$ and $\iota(\xi)$
is $(K_0, [\bar \omega])-$null spherical, thus there is a Lagrangian $\bar L\subset (\bar M, \bar \omega)$ in the class $\iota(\xi)$. By applying Theorem \ref{general} to $\bar L$ and $E_4$,
we conclude the proof by blowing down an exceptional sphere in class $E_4$ disjoint from $\bar L$.

\pftwo
Now the proof of Theorem \ref{Lagrangian sphere class classification} in the rational manifold case is complete.
\pftwo
\subsubsection{Irrational ruled manifolds}\label{section:irrational ruled-existence}
\pfone[Proof of Theorem \ref{Lagrangian sphere class
classification}, irrational ruled manifold case:]

Similar to the rational case, it reduces to the following statement.
\pftwo
\begin{prop}
Suppose $M=(\Sigma_h\times S^2)\# n \overline{\CC P}^2$ with $\{T,F, E_1, \cdots, E_n\}$  a standard basis, and $\omega$ is a symplectic form
with $K_{\omega}=K_0=PD(-2T(2h-2)F+E_1+\cdots+ E_n)$. Then $\xi\in H_2(M;\ZZ)$ is represented by a Lagrangian sphere if and only if
$\xi$ is $(K_0, [\omega])-$null spherical.
\end{prop}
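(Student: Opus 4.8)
The plan is to run the same argument as in the rational case, now using the classification of $\LL_{K_0}$ for blow-ups of $\Sigma_h\times S^2$ provided by Lemma \ref{rule1}. \textbf{Necessity} is immediate: if $L$ is a Lagrangian sphere then its normal bundle is $T^*S^2$, so $[L]^2=-2$; moreover $K_\omega\cdot[L]=-c_1(TM)\cdot[L]=0$, since $TM|_L\cong TL\otimes_{\RR}\CC$ has vanishing first Chern number; and $\omega([L])=0$ because $L$ is Lagrangian. Hence $[L]\in\LL_{K_0}$ and $[L]$ is $(K_0,[\omega])$-null spherical. For \textbf{sufficiency}, recall from Lemma \ref{rule1} that $\LL_{K_0}=\{\pm(E_i-E_j),\ \pm(F-E_i-E_j):1\le i<j\le n\}$. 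Since $\xi$ admits a Lagrangian representative iff $-\xi$ does, we may take $\xi=E_i-E_j$ (binary) or $\xi=F-E_i-E_j$ (ternary); in particular there is nothing to prove when $n\le 1$, as then $\LL_{K_0}=\varnothing$.

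\textbf{Binary case.} If $\xi=E_i-E_j$ with $\omega(\xi)=0$, then $E_i$ and $E_j$ are orthogonal classes in $\EE_\omega$ (being $K_0$-exceptional with positive $\omega$-area) of equal $\omega$-area. Lemma \ref{persistence} then produces directly a Lagrangian sphere in the class $E_i-E_j$. As in the rational manifold proof, one first perturbs $\omega$ to a form of rational class inside $\xi^{\perp}\cap\CCC_{K_0}$, applies Lemma \ref{persistence}, and propagates the Lagrangian sphere back to $\omega$ along the deformation; I would dispose of this routine reduction at the outset.

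\textbf{Ternary case.} Suppose $\xi=F-E_i-E_j$ with $\omega(\xi)=0$. If $n\ge 3$, I would choose an index $k\notin\{i,j\}$; then $F-E_j-E_k\in\LL_{K_0}$, so the spherical reflection $R(F-E_j-E_k)$ lies in $D_{K_0}(M)$ and is realized by an orientation-preserving self-diffeomorphism $\phi$ of $M$ fixing the canonical class $K_0$. A one-line computation gives $\phi_*(\xi)=\pm(E_i-E_k)$, a binary class, while $(\phi^{-1})^*\omega$ still has canonical class $K_0$ and pairs trivially with $\phi_*(\xi)$; the binary case then yields a Lagrangian sphere in $(M,(\phi^{-1})^*\omega)$ representing $\phi_*(\xi)$, and transporting it back by $\phi^{-1}$ gives one in $(M,\omega)$ representing $\xi$. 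If $n=2$ there is no spare index, and I would mimic the $n=3$ rational case: blow up one point of $(M,\omega)$ to obtain $(\overline M,\overline\omega)$ with three exceptional basis classes $E_1,E_2,E_3$; the image $\iota(\xi)=F-E_1-E_2$ is again $(K_0,[\overline\omega])$-null spherical, so by the case just treated there is a Lagrangian sphere $\overline L\subset(\overline M,\overline\omega)$ in the class $\iota(\xi)$. Since $\iota(\xi)\cdot E_3=0$, Theorem \ref{main} applied to $\overline L$ and the exceptional class $E_3$ displaces a symplectic $E_3$-sphere off $\overline L$, and blowing it down returns $(M,\omega)$ with $\overline L$ still Lagrangian, in the class $\xi$.

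The point to emphasize is that essentially all the analytic work has already been done: the existence half rests on Lemma \ref{persistence} (whose proof passes through the monotone $S^2\times S^2$ local model, the relative inflation Lemma \ref{inflation}, and Theorem \ref{general}), and the final blow-down uses Theorem \ref{main}. Consequently the only genuinely new ingredient here is the combinatorial input of Lemma \ref{rule1}, which is shaped exactly so that every $K_0$-null spherical class reduces --- after at most one auxiliary blow-up when $n=2$ --- to a binary class. I therefore do not expect a serious obstacle; the care required is confined to the bookkeeping of the ternary reduction and to the rational-approximation step needed to invoke Lemma \ref{persistence} and the blow-up/blow-down lemmas.
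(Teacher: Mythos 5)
Your proof is correct (to the same standard of detail as the paper's own existence arguments, which likewise leave implicit the rational--approximation step needed to invoke Lemmas \ref{1-blowup} and \ref{persistence}), but it takes a genuinely different route from the paper's. The paper reduces the irrational ruled case to the already--established rational case by McDuff's cut--and--paste: it regards $(M,\omega)$ as a genus-$0$ Lefschetz fibration over $\Sigma_h$ with $n$ reducible fibers, cuts along the preimage of a wedge of loops $\Lambda_h\subset\Sigma_h$, compactifies the resulting fibration over a disk by adding a fiber $F_0$ to obtain $(S^2\times S^2)\# n\overline{\CC P}^2$ (with $F$ corresponding to $H-E_1$ and the null spherical classes to $E_i-E_j$ and $H-E_1-E_i-E_j$), invokes the rational case, and finally uses Theorem \ref{main} to push the Lagrangian sphere off the square-zero sphere $F_0$ so that it survives the reverse gluing. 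You instead stay on $M$ throughout: your binary case is in effect Corollary \ref{b+=1 existence}(2) via Lemma \ref{persistence}, and your ternary case is reduced to it by the $K_0$-twist $R(F-E_j-E_k)$ --- which indeed fixes $F$ and $PD(K_0)$ and so lies in $D(M)$ by the Friedman--Morgan characterization --- with one auxiliary blow-up when $n=2$, exactly mirroring the paper's treatment of the ternary class $H-E_1-E_2-E_3$ when $n=3$ in the rational case. What the paper's route buys is that no reflection needs to be realized by a diffeomorphism of the irrational ruled manifold itself; what yours buys is uniformity with the rational argument and avoidance of the Lefschetz-fibration surgery (including the symplectic deformation near $x_0$ needed to make the fibration a product over $\Lambda_h$). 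Both versions ultimately rest on the same analytic inputs: Theorems \ref{main} and \ref{general}, Lemma \ref{persistence}, and Lemma \ref{1-blowup}.
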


\pfone
We use the cut and paste procedure in \cite{Mcduff} to reduce it to the rational manifold case.

We can view $(M, \omega)$ as a symplectic genus 0 Lefschetz fibration over $\Sigma_h$ with
$n$ reducible fibers, each consisting of a pair of exceptional spheres in the classes $E_i$ and $F-E_i$.
Denote the projection by $\pi$ and the image
of the reducible fibers by
  $B$.
 View  $\Sigma_h$
as assembled from a $4h$-sided polygon  with  the vertices going to
 $x_0\in \Sigma_h$, the edges going to a $2h-$wedge of loops $\Lambda_h$.
 Since $B$ is a finite set, we can assume that $B\cap \Lambda_h=\emptyset$.

We  cut $M$ along  $\pi^{-1}(\Lambda_h)$ to
obtain a genus 0  Lefschetz fibration  $V$ over a two disk $D$ with $n$ reducible fibers.
Recall from Lemmas 4.13 and  4.14 in
\cite{Mcduff}  that with a symplectic deformation
supported near an arbitrarily small neighborhood of $x_0$,
$(M, \omega)$ can be assumed to be a symplectic product in a neighborhood of $\pi^{-1}(\Lambda_h)$.  Therefore we can compactify  $(V, \omega)$ into  a genus 0 Lefschetz fibration $(\bar V, \bar \omega)$ over $S^2$ with
$n$ reducible fibers
by adding a fiber $F_0$.

Notice that $V$ is diffeomorphic to
$(S^2\times D^2)\# n\overline{\CC P}^2 $, and $\bar V$ is diffeomorphic to
$(S^2\times S^2)\# n\overline{\CC P}^2 $=$(\CC P^2 \# \overline{\CC P}^2)\# n\overline{\CC P}^2 $.
Moreover, in the standard basis representation, $F$ corresponds to $H-E_1$, and $E_i$ corresponds to $E_i$.
In particular, a $(K_0, [\omega])-$null spherical class corresponds to either $H-E_1-E_i-E_j$ or $E_i-E_j$, $2\leq i<j\leq n$.

We have shown there are Lagrangian spheres in $(\bar V, \bar \omega)$ in these classes.
What remains to prove is that there are  Lagrangian spheres  disjoint from the symplectic sphere $F_0$.
This is true due to Theorem \ref{main}, since $[F_0]=H-E_1$ is a square 0 class,
orthogonal to  $H-E_1-E_i-E_j$ and  $E_i-E_j$ for any  $2\leq i<j\leq n$.




\pftwo

\subsection{Homological action}
\label{subsection:homological
action}

We are now ready to prove Theorem
\ref{homological action theorem}.

\pfone Let $(M,\omega)$ be
 a symplectic  4-manifold with $\kappa=-\infty$. Further assume that a standard basis is chosen.
As mentioned in the proof of Theorem \ref{Lagrangian sphere class classification},
 fixing the canonical class causes no loss of generality.  Thus we assume that $K_{\omega}=K_0$.

On the one hand, if $f\in Symp(M, \omega)$, then $f_*\in D_{K_0, [\omega]}(M)$.
On the other hand,  Theorem \ref{Lagrangian sphere class classification} implies any $(K_0,[\omega])$-twist
 is realized by a Lagrangian Dehn twist.  With this understood,
Theorem \ref{homological action theorem} is simply a consequence of Proposition \ref{homological twists}, Lemma \ref{rule2},  and Theorem \ref{Lagrangian sphere class classification}.

\pftwo

\corone If $(M,\omega)$ is monotone, the representation of
the symplectic mapping class group on $H_2(M;\ZZ)$, namely, the Torelli part,
is $D_{K_\omega}(M)$.
\cortwo


\rmkone Corollary \ref{b+=1 existence} also has its counterpart, which asserts that, when $b^+(M)=1$ and $\kappa(M)\geq0$,  the
homological action of  $Symp(\overline M, \bar \omega)$ is generated by the homological action of
$Symp(M, \omega)$ and binary Lagrangian reflections.

It would be  interesting to know  whether for any
minimal $(M, \omega)$ with $b^+=1$ and $\kappa(M)\geq 0$,  the homological action of $Symp(M, \omega)$  is generated  by Lagrangian Dehn twists.


\rmktwo


\section{Uniqueness of Lagrangian spheres in rational manifolds}\label{section: Lagrangian uniqueness}

The present section is devoted to the proof of Theorem
\ref{Lagrangian isotopy uniqueness}.  We begin by reviewing two
basic uniqueness results of Hind for $S^2\times S^2$ and $T^*S^2$.



\subsection{Review of Hind's results}

\subsubsection{$S^2\times S^2$ via symplectic cut}\label{section:alternative approach-cut}
For $S^2\times S^2$ we have the  uniqueness up to isotopy  in
\cite{Hind}:

\thmone[Hind]\label{Hind's theorem S2 times S2}  Lagrangian spheres in a monotone $S^2\times S^2$ are unique up to Hamiltonian isotopy.
\thmtwo

From the connectedness of $Symp(S^2\times S^2,
\sigma\oplus\sigma)$ by Gromov \cite{Gromov}, Theorem
\ref{Hind's theorem S2 times S2} is equivalent to

\begin{prop}\label{Hind's} Lagrangian spheres in a monotone
$S^2\times S^2$  are unique up to
symplectomorphisms.
\end{prop}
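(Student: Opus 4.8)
The plan is to reduce the statement, via a symplectic cut, to Hind's uniqueness of Lagrangian spheres in $T^*S^2$. Let $L$ be a Lagrangian sphere in the monotone $(S^2\times S^2,\sigma\oplus\sigma)$. Since $[L]^2=-2$ and $K_\omega([L])=0$ are forced, one has $[L]=\pm(A-B)$ for the two ruling classes $A=[S^2\times pt]$, $B=[pt\times S^2]$, so $[L]\cdot(A+B)=0$. First I would apply Theorem~\ref{main} to the diagonal $\Delta$, a symplectic sphere in class $A+B$ of square $2\geq-1$: it can be isotoped symplectically to a symplectic sphere $C$ in class $A+B$ meeting $L$ minimally, i.e.\ \emph{disjoint} from $L$. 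By Proposition~\ref{symplectic connectedness} applied to $M$ and the class $A+B$, $C$ is symplectically isotopic to $\Delta$; by the symplectic isotopy extension theorem there is $\phi\in Symp(M,\omega)$ with $\phi(C)=\Delta$, hence $\phi(L)$ is a Lagrangian sphere in $M\backslash\Delta$. Recalling from Section~\ref{sft} that $M\backslash\Delta$ is symplectomorphic to $T^*_1S^2$ with $\bar\Delta$ corresponding to the zero section, and that $\phi(L)$, being compact, lies in some $\overline{T^*_\lambda S^2}$ with $\lambda<1$, Hind's $T^*S^2$ unknottedness theorem \cite{Hind} provides a compactly supported Hamiltonian isotopy of $T^*S^2$ carrying $\phi(L)$ onto the zero section. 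Extending this isotopy by the identity across a neighborhood of $\Delta$ and composing with $\phi$, one obtains a symplectomorphism of $S^2\times S^2$ taking $L$ onto $\bar\Delta$; since $L$ was arbitrary, any two Lagrangian spheres are equivalent under $Symp(S^2\times S^2,\sigma\oplus\sigma)$.

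It is worth noting that the Proposition is in any case immediate from the Hamiltonian uniqueness of Theorem~\ref{Hind's theorem S2 times S2}, because a Hamiltonian diffeomorphism is a symplectomorphism; the two statements are in fact equivalent, the converse implication following from Gromov's connectedness of $Symp(S^2\times S^2,\sigma\oplus\sigma)$ \cite{Gromov} together with the vanishing of $H^1(S^2\times S^2;\RR)$, which forces any path from the identity in the symplectomorphism group to be a Hamiltonian isotopy.

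The step I expect to carry the real weight is the appeal to Hind's $T^*S^2$ theorem: the symplectic--cut argument is only a \emph{reduction}, and the hard analytic input --- the SFT compactness and the structure of $J^0$--holomorphic planes recalled in Section~\ref{sft} --- is what makes that theorem true in the first place. If one insisted on a proof that did not quote it, the route would be to neck--stretch along a Weinstein neighborhood of $L$ itself (rather than along $\Delta$), use Proposition~\ref{evans}, Theorem~\ref{cpt} and Lemmas~\ref{U-plane characterization} and~\ref{U-planes are in the same family} to produce two transverse foliations of $S^2\times S^2$ by symplectic spheres in classes $A$ and $B$ each meeting $L$ in a single point, and conclude with a Moser argument identifying $(M,\omega,L)$ with the standard model; arranging that interpolation to keep $L$ Lagrangian, for which the Lagrangian--relative inflation of Section~\ref{section:Lagrangian spherical} is the natural device, would then be the delicate point, exactly as in \cite{Hind}.
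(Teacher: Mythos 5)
Your argument is essentially correct, but it takes a genuinely different route from the paper's, and the difference matters for how the proof sits inside the paper. You displace $L$ from the diagonal $\Delta$ (via Theorem~\ref{main}, using $[L]\cdot(A+B)=0$), identify the complement of $\Delta$ with $T^*_1S^2$, and then import Hind's $T^*S^2$ unknottedness theorem as a black box (note the citation should be \cite{Hind2}, not \cite{Hind}). The paper instead performs a symplectic cut along the boundary of a Weinstein neighborhood of each $L_i$, producing symplectic pairs $((S^2\times S^2,\omega_i),\Sigma_i)$ with $\Sigma_i$ a $(-2)$-sphere; it then invokes the Lalonde--McDuff uniqueness of cohomologous symplectic forms together with Proposition~\ref{negative connectedness theorem} (connectedness of the space of $(-2)$-spheres, from Abreu--McDuff) to get a symplectomorphism of pairs, and reassembles by symplectic sum. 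The point of that choice is twofold: first, the paper later \emph{derives} Theorem~\ref{Hind's theorem} (the $T^*S^2$ uniqueness) from Proposition~\ref{Hind's} via Seidel's computation of $\pi_0\,Symp_c(T^*_1S^2)$, so within the paper's logical architecture your appeal to the $T^*S^2$ theorem would make that derivation circular --- it is rescued only because Hind proved the $T^*S^2$ statement independently in \cite{Hind2}, which defeats the purpose of this subsection, namely to give a proof not resting on Hind's SFT analysis; second, the cut-along-$\partial(\text{Weinstein nbhd})$ argument recycles verbatim for Lagrangian $\RR P^2$ (replacing $(-2)$- by $(-4)$-spheres, cf.\ \ref{rp2}), whereas your reduction does not. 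Your observation that the Proposition follows trivially from Theorem~\ref{Hind's theorem S2 times S2} is of course true but again imports exactly the hard result the paper is trying to reprove. Finally, two small points in your argument deserve more care: the ambient symplectomorphism $\phi$ with $\phi(C)=\Delta$ requires the symplectic isotopy extension argument (using $H^1(S^2)=0$), and Hind's theorem does not directly hand you a \emph{compactly supported} isotopy inside $T^*_1S^2$ --- you need the negative Liouville flow contraction (legitimate since Lagrangian spheres in $T^*S^2$ are exact) plus a cut-off of the generating Hamiltonian, exactly as the paper does in Proposition~\ref{complement-unique-rational}.
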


We here offer an argument for this weaker version of uniqueness  using
 an idea from Hind \cite{Hind2}  turning the Lagrangian uniqueness problem
into a symplectic uniqueness  problem via symplectic cut.
Such an argument is useful for the uniqueness of  Lagrangian $\RR P^2$ in rational manifolds (see \ref{rp2}).
Some preparations are in order.

Denote by $A$, $B\in H_2(S^2\times S^2;\ZZ)$ the classes of two product factors
on $S^2\times S^2$. Let $\Omega_\lambda$  be the product symplectic
form $\pi_1^*\sigma+(1+\lambda)\pi_2^*\sigma$ with $\lambda>0$.
Let $\JJ_{\lambda}$ be the
space of $\Omega_\lambda$-tamed almost complex structures.
The following is due to Abreu and
McDuff:

\thmone[\cite{AM}, Proposition 2.1, Corollary 2.8]\label{AbM}
Suppose  $l-1<\lambda\leq l$, $l$ an integer. Then $\JJ_\lambda$ admits a stratification $\{U_k\}_{0\le k\le l}$ with the following properties:

\begin{enumerate}[(1)]
\item For any $J\in U_k$, the class $A-kB$ is represented by a
unique embedded $J$-holomorphic sphere;
\item Each $U_k$ is connected.
\end{enumerate}

\thmtwo

 As a consequence, we have the following claim:

\propone\label{negative connectedness theorem} The space of
symplectic spheres with self-intersection $-2k$ in  $(S^2\times S^2, \omega_{\lambda})$ is
non-empty and connected if $\lambda>k-1$. \proptwo

\pfone A symplectic sphere with self-intersection $-2k$ is in the class $A-kB$, and it
 exists if and only if  $\lambda>k-1$. For two such
symplectic spheres $C_i, i=0,1$, there are
almost complex structures $J_i\in U_k$ such that
$C_i$ is $J_i$-holomorphic for $i=0,1$. By Theorem \ref{AbM} (2), there is a
path $J_t$  in $U_k$ connecting $J_0$ and $J_1$. By Theorem \ref{AbM} (1), there is a unique sphere $C_t$  with self-intersection $-2k$ for each $J_t$.  This path of symplectic spheres is
continuous due to  Gromov's compactness.

\pftwo



\pfone[Proof of Proposition \ref{Hind's}:] Given two Lagrangian
spheres $L_1, L_2$ in $S^2\times S^2$ with a monotone symplectic
form $\omega$. By Weinstein's neighborhood theorem one can fix two
symplectic embeddings $\phi_1$, $\phi_2$: $T_r^*S^2\rightarrow
S^2\times S^2$  for some small $r>0$. For each $i$, consider the
geodesic flow on  $S^2$ with the standard round metric. By
performing symplectic cut on $(S^2\times S^2, \omega)$
 along the boundary of the image of $\phi_i$, we obtain
 a pair of $S^2\times S^2$ for each $i$: one comes from $\phi_i(T_r^*S^2)$, equipped
 with the standard monotone symplectic form of size $r$;
 and the other one comes from the complement of $\phi_i(T_r^*S^2)$, equipped with symplectic form $\omega_i$
 and a symplectic $(-2)$-sphere $\Sigma_i$. Clearly, $[\omega_0]=[\omega_1]$.


It follows from the uniqueness of homologous symplectic structures
in \cite {LM2} and Proposition \ref{negative connectedness theorem},
there is a symplectomorphism of  pairs:

$$\iota: ((S^2\times S^2, \omega_1), \Sigma_1)\rightarrow ((S^2\times S^2, \omega_2), \Sigma_2),$$

\nono where $\iota$ sends a neighborhood of $\Sigma_1$
symplectomorphically to one of $\Sigma_2$.  Via symplectic sum
(\cite{Gompfpaper}), which is the exact inverse of symplectic cut
(as pointed out by Gompf), $\iota$ leads to a symplectomorphism of
pairs $\Psi: ((S^2\times S^2, \omega), L_1)\rightarrow ((S^2\times
S^2, \omega), L_2)$.
\pftwo




\subsubsection {$T^*S^2$ and the symplectic mapping class group}

Further exploring the symplectic cut approach in  \ref{section:alternative approach-cut}, we
obtain an alternative proof of Hind's Lagrangian sphere
uniqueness in $T^*S^2$ below
via Seidel's description of the compactly supported symplectomorphism group of $T^*S^2$.


\thmone[Hind, \cite{Hind2}]\label{Hind's theorem} Lagrangian
spheres in $(T^*S^2,\omega_{std})$ are unique up to Hamiltonian  isotopy. \thmtwo

\pfone[Proof:]
Via the negative Liouville flow and scaling we can isotope any Lagrangian in  $(T^*S^2,\omega_{std})$ into one in $(T_1^*S^2,\omega_{std})$.
Further, via the  identification
$(T_1^*S^2, \omega_{std})=(S^2\times S^2, \omega_0)\backslash \Delta$, where $\omega_0$ is a monotone form and
$\Delta$ is the diagonal of $S^2\times S^2$, it suffices to show the the uniqueness of
Lagrangian spheres in $(S^2\times S^2, \omega_0)\backslash \Delta$.

Given two Lagrangian spheres  $L_1$, $L_2\in
(S^2\times S^2, \omega_0)\backslash \Delta$, we first claim that there is
$\phi\in Symp_c(T_1^*S^2,\omega_{std})$ such that $\phi(L_1)=L_2$,
where $Symp_c$ denotes the compactly supported symplectomorphism group.

Without loss of generality we assume $L_2=\bar{\Delta}$, which is
the antidiagonal, corresponding in turn to the zero section of
$T^*S^2$.
  By Proposition \ref{Hind's}, there is $\Psi\in Symp(S^2\times S^2,
\omega_0)$, such that $\Psi(L_1)=L_2$.  $\Psi$ may not fix $\Delta$,
but notice that $\Psi(\Delta)\cap\bar{\Delta}(=L_2)=\emptyset$.
Since the complement of $\bar{\Delta}$  is canonically identified
with a symplectic disk bundle over the diagonal, by  \cite{HI} there
is a symplectic isotopy $\tilde{\Phi}_t:S^2\rightarrow (S^2\times
S^2, \omega_0)$ fixing $\bar{\Delta}$ and connecting the two
symplectic spheres $\Psi(\Delta)$ and $\Delta$.
In particular, $\tilde{\Phi}_t\circ\Psi(\Delta)$ is disjoint from
$\bar{\Delta}$ for each $t$.

Now we extend $\tilde{\Phi}_t$ to a symplectic isotopy
 of a neighborhood $U$ of $\Psi(\Delta)$ which we
still denote as $\tilde{\Phi}_t$ (Ex. 3.40 in \cite{MSI}), and
require that $\tilde{\Phi}_t(U)$ be still disjoint from
$\bar{\Delta}$ for all $t$. We then trivially extend
$\tilde{\Psi_t}$ to $\tilde{\phi}_t$, a symplectic isotopy on a
neighborhood $U'$ of $\Psi(\Delta)\cup \bar{\Delta}$,
 which restricts to $\tilde{\Phi}_t$ on $U$ and to the  identity near $\bar{\Delta}$.
 Since $H^1(U';\RR)=0$, $H^2(S^2\times S^2, U';\RR)$ injects into $H^2(S^2\times S^2;\RR)$.
 By the argument proving Banyaga's isotopy extension theorem (see for example \cite{MSI},
Theorem 3.19), $\tilde{\phi_t}$ extends to a global symplectic
isotopy $\phi_t$ of $(S^2\times S^2,\omega_0)$, where $\phi_0=id$,
$\phi_1(L_1)=L_2$, and $\phi_1|_{\Delta}=id$.

Consider $\phi'=\phi_1\circ\Psi\in Symp(S^2\times S^2, \omega_0)$.
 Since $\phi'$ is the identity on $\Delta$, it induces a compactly supported
symplectomorphism $\phi$ of $(T_1^*S^2, \omega_{std})$ up to
isotopy, mapping $L_1$ to the zero section $L_2$.

From Seidel's description of
$Symp_c(T_1^*S^2,\omega_{std})$ in \cite{Seidel}, $\phi=\tau^n\circ
\eta_1$, where $\tau$ is the Lagrangian Dehn twist along the zero section $L_2$,
and $\eta_t$, $t\in[0,1]$ with $\eta_0=id$ is a compactly supported
symplectic isotopy. Now it is clear that $\tau^n\circ \eta_t(L_2)$
is a path connecting $L_1$ to the zero section since $\tau$ fixes
the zero section. \pftwo

\subsection{Proof of Theorem \ref{Lagrangian isotopy uniqueness}}


\nono
For $k\geq 0$ we will denote by $V_k$ the manifold $(S^2\times S^2)\# k\overline{\CC P}^2$. When $k\geq 1$,
$V_k=\CC P^2\# (k+1)\overline{\CC P}^2$.
 Due to Theorem \ref{Hind's theorem S2 times S2} and the fact that $\CC P^2\# \overline{\CC P}^2$ has no
 spheres with self-intersection $-2$, we only need to prove Theorem \ref{Lagrangian isotopy uniqueness} for  $V_k$ with $k=1,3$, and $k=2$ but $[L]$  not characteristic.
By Proposition \ref{K-Lagrangian sphere class classification}, we may further assume that $[L]$ is the binary class $E_1-E_2$.

Throughout this subsection, $J_0$ denotes the
complex structure obtained from a generic $k$-point complex blow-up of  $\CC P^1\times \CC P^1$.
 Without loss of generality, we may assume $\omega$ is a K\"ahler form
compatible with  $J_0$.  This follows from Proposition 4.8 in \cite{Li2} that
the  symplectic cone is the same as the $J_0$-compatible cone in
$H^2(V_k,\RR)$ when $k\leq 8$, as well as the uniqueness of homologous symplectic forms in
\cite{MIsotopy}.

To prove Theorem \ref{Lagrangian isotopy uniqueness}, we apply Theorem \ref{main} and follow the approach in
\cite{Evans} where  the   monotone case is settled.
  For some of the details one is referred to Section 9
of \cite{Evans} and 4.2 of \cite{EvansT}.


For  the binary class $E_1-E_2$, the following stable symplectic sphere configuration \textit{type} (Definition \ref{configuration}) $D_{E_1-E_2}$  is introduced in \cite{Evans}:

\begin{itemize}
 \item  $\{H-E_1-E_2, H\}$ when $k=1$,
 \item  $\{H-E_1-E_2, H-E_3, E_3\}$ when $k=2$,
 \item  $\{H-E_1-E_2, H-E_3-E_4, E_3,E_4\}$ when $k=3$.
\end{itemize}

Since $(V_k, J_0)$ is a generic blow up, it is clear that there is a $J_0-$holomorphic $D_{E_1-E_2}$ configuration $C_0$.

\begin{lemma}\label{off}
Suppose $L$ is a Lagrangian sphere in $(V_k, \omega)$ with $k\leq 3$ and  $[L]=E_1-E_2$. Then $L$ can be Hamiltonian isotoped off $C_0$.
\end{lemma}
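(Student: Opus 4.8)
The plan is to displace $L$ from $C_0$ in two moves: first produce \emph{some} symplectic configuration of type $D_{E_1-E_2}$ that avoids $L$, and then bring that configuration back to $C_0$ by an ambient Hamiltonian isotopy, letting $L$ ride along.

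\textbf{Step 1 (a displaced configuration).} In the standard basis, each class occurring in $D_{E_1-E_2}$ — among $H-E_1-E_2$, $H$, $H-E_3$, $E_3$, $H-E_3-E_4$, $E_4$, according to $k$ — pairs trivially with $[L]=E_1-E_2$; the only non-obvious check is $(E_1-E_2)\cdot(H-E_1-E_2)=1-1=0$. Since $C_0$ exhibits $D_{E_1-E_2}$ as the homological type of an $\omega$-symplectic stable spherical configuration (Definition \ref{configuration}), Corollary \ref{lemma:configuration existence} applies and supplies an $\omega$-symplectic $D_{E_1-E_2}$-configuration $C'$ with $C'\cap L=\emptyset$.

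\textbf{Step 2 (connecting $C'$ to $C_0$).} Both $C'$ and $C_0$ are stable spherical symplectic configurations of homological type $D_{E_1-E_2}$, so by Proposition \ref{symplectic connectedness'} they are isotopic; concretely, path-connectedness of the parameter set $\mathcal T_{D_{E_1-E_2}}$ yields a continuous family $\{G_t\}_{t\in[0,1]}$ of $\omega$-symplectic $D_{E_1-E_2}$-configurations with $G_0=C'$ and $G_1=C_0$. I would then extend this family to an ambient symplectic isotopy $\psi_t$ of $(V_k,\omega)$ with $\psi_0=\mathrm{id}$ and $\psi_t(C')=G_t$, in particular $\psi_1(C')=C_0$, using the symplectic isotopy extension theorem: away from the transverse double points this is Banyaga's theorem (cf.\ its use in \ref{section:alternative approach-cut}), while at each node one invokes a symplectic neighbourhood theorem for two symplectic surfaces meeting transversally, with the neighbourhoods depending continuously on $t$.

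\textbf{Step 3 (upgrading to Hamiltonian and concluding).} The manifold $V_k=(S^2\times S^2)\#k\overline{\CC P}^2$ is simply connected, so $H^1(V_k;\RR)=0$; hence every symplectic vector field generating $\psi_t$ is Hamiltonian, $\psi_t$ is a Hamiltonian isotopy, and $\psi_1\in Ham(V_k,\omega)$. Since $L\cap C'=\emptyset$, we get $\psi_1(L)\cap C_0=\psi_1(L\cap C')=\emptyset$, so $t\mapsto\psi_t(L)$ is a Hamiltonian isotopy carrying $L$ off $C_0$, as required.

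The main obstacle is Step 2: extending a one-parameter family of embedded \emph{normal-crossing} symplectic configurations to an ambient symplectic isotopy needs a parametric symplectic neighbourhood theorem near the nodes, and this is the only point requiring genuine care. The remaining ingredients — the intersection bookkeeping, the fact that $C_0$ realizes the type so that Corollary \ref{lemma:configuration existence} is available, and the vanishing of the flux forcing ``symplectic'' to become ``Hamiltonian'' — are routine.
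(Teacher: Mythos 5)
Your proposal is correct and follows essentially the same route as the paper: produce a $D_{E_1-E_2}$-configuration disjoint from $L$ via Corollary \ref{lemma:configuration existence}, connect it to $C_0$ via Proposition \ref{symplectic connectedness'}, and extend to an ambient Hamiltonian isotopy using the symplectic neighbourhood theorem together with the Banyaga-type extension argument. The nodal issue you flag in Step 2 is handled in the paper by perturbing the configurations to intersect $\omega$-orthogonally along the isotopy (following Evans), which reduces the parametric neighbourhood statement to the standard one.
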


\pfone
From Corollary \ref{lemma:configuration existence}, in the complement of the given Lagrangian sphere $L$, we
can find a $D_{E_1-E_2}$-configuration $C$.

By Corollary \ref{symplectic connectedness'}, $C_0$ and $C$ are symplectically isotopic.
Following the proof of Theorem 9 in \cite{EvansT}, with a small
perturbation along the isotopy, we may assume the  symplectic spheres in the
configuration intersect $\omega$-orthogonally during the isotopy. Thus, by the symplectic neighborhood theorem,  we can extend this isotopy to
a neighborhood of the configuration.
From the fact that $C$ and $C_0$ have trivial
$H^1$, as in the proof of Theorem \ref{Hind's theorem},
we  obtain an ambient Hamiltonian isotopy $\Psi_t$ taking $C$ to $C_0$. In particular,
$L$ is Hamiltonian isotopic to $\Psi_1(L)$ which is disjoint from $C_0$.

\pftwo

\begin{prop}\label{complement-unique-rational} Suppose there  is a Lagrangian sphere $L$ in $(V_k, \omega)$ with $k\leq 3$ and  $[L]=E_1-E_2$.
 When $[\omega]$ is a rational, the complement of $C_0$ contains a unique Lagrangian sphere up to Lagrangian isotopy.

\end{prop}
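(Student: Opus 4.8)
The plan is to show that the complement of $C_0$ is symplectically a cotangent disk bundle and then to invoke Hind's uniqueness theorem for $T^*S^2$ (Theorem \ref{Hind's theorem}), the rationality of $[\omega]$ being exactly what lets us bypass the monotone normalization used by Evans. Normalizing as in the proof of Lemma \ref{off}, we may assume (Proposition \ref{K-Lagrangian sphere class classification}) that all Lagrangian spheres in play have class $E_1-E_2$, and that the components of $C_0$ meet pairwise $\omega$-orthogonally, so that $C_0$ has a standard symplectic neighborhood $\mathcal N(C_0)$; put $N:=V_k\setminus\mathcal N(C_0)$ (any Lagrangian sphere in $V_k\setminus C_0$ is pushed into $N$ by compactness). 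First I would record the topology of $N$: it is a Liouville domain, $\chi(N)=\chi(V_k)-\chi(C_0)=2$, $H_2(N;\ZZ)=\ZZ\langle E_1-E_2\rangle$, $\partial N$ has $H_1=\ZZ/2$, and $N$ contains a symplectic sphere of square $-2$ (an algebraic representative of $E_1-E_2$ lying in $V_k\setminus C_0$); hence $N$ is diffeomorphic to the disk cotangent bundle $\overline{T^*S^2}$ — as one also sees directly from the $k$-fold blow-up picture, exactly as in \cite{Evans} — and it carries a reference Lagrangian sphere $L_{\mathrm{std}}$, the zero section of a fixed subdomain $\overline{T^*_\rho S^2}\subset N$. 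The proposition is then the statement that any Lagrangian sphere $L\subset N$ is Lagrangian isotopic \emph{inside} $N$ to $L_{\mathrm{std}}$.

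For this I would run the symplectic-cut argument from the proofs of Proposition \ref{Hind's} and Theorem \ref{Hind's theorem}. Fix a Weinstein neighborhood $U\cong(T^*_rS^2,\omega_{can})$ of $L$ inside $N$ with $r$ small and cut $(V_k,\omega)$ along $\partial U$: the inner piece is a monotone $S^2\times S^2$ with $L$ corresponding to the antidiagonal and $\partial U$ to the diagonal, while the outer piece $(M,\omega_M)$ is a closed symplectic $4$-manifold still containing the untouched $C_0$ together with a symplectic $(-2)$-sphere $\Sigma$ (the image of $\partial U$) meeting $C_0$ along the configuration type forced by $D_{E_1-E_2}$ and the classes of $C_0$. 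The heart of the matter is that $(M,\omega_M)$ together with the configuration $\Sigma\cup C_0$ is independent of $L$ up to symplectomorphism: the diffeomorphism type of $M$ and the class $[\omega_M]$ depend only on $[L]=E_1-E_2$ and the cut size $r$; $[\omega_M]$ is rational since $[\omega]$ is, so McDuff's isotopy theorem for rational $4$-manifolds (\cite{LM2}, \cite{MIsotopy}) pins $\omega_M$ down up to isotopy within its cohomology class; and $\Sigma\cup C_0$ is a stable spherical symplectic configuration, so Proposition \ref{symplectic connectedness'} pins it down up to symplectic isotopy. Reversing the cut (symplectic sum, the exact inverse of symplectic cut, glued to the identity of the monotone $S^2\times S^2$) then produces a symplectomorphism $\Phi$ of $(V_k,\omega)$ fixing a neighborhood of $C_0$, hence restricting to a symplectomorphism of $N$, with $\Phi(L)=L_{\mathrm{std}}$.

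Finally I would upgrade the symplectomorphism $\Phi$ of $N\cong\overline{T^*S^2}$ to a genuine Lagrangian isotopy. By Seidel's description of the symplectic mapping class group of $\overline{T^*S^2}$ (\cite{Seidel}) one may write $\Phi=\tau^m\circ\Phi_0$ with $\tau$ the model Dehn twist and $\Phi_0\in Symp_0(N)$ for some $m\in\ZZ$; since $\tau$ preserves the zero section $L_{\mathrm{std}}$, we get $\Phi_0(L)=\tau^{-m}(L_{\mathrm{std}})=L_{\mathrm{std}}$, and contracting $\Phi_0$ to the identity in $N$ carries $L$ to $L_{\mathrm{std}}$ through Lagrangians — alternatively, after contracting $L$ into $\overline{T^*_{r'}S^2}\subset N$ one applies Theorem \ref{Hind's theorem} directly. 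Thus any two Lagrangian spheres in $N$ are Lagrangian isotopic in $N$ (and since $H^1(S^2)=0$ this isotopy is automatically Hamiltonian, which is what feeds into Theorem \ref{Lagrangian isotopy uniqueness}). The main obstacle is the middle step: matching the cut manifolds $(M,\omega_M,\Sigma\cup C_0)$ across different $L$ up to symplectomorphism while retaining enough control of the gluing data to reconstruct an honest symplectomorphism of $(V_k,\omega)$ fixing $C_0$. This is Evans' argument of \cite{Evans}, \cite{EvansT} with McDuff's rational isotopy theorem standing in for monotonicity, and the various uniqueness inputs — of the form $\omega_M$, of the configuration $\Sigma\cup C_0$, and of a symplectic neighborhood of $\Sigma\cup C_0$ — must be arranged to be mutually compatible.
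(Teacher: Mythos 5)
Your architecture is genuinely different from the paper's: you cut $(V_k,\omega)$ along the boundary of a Weinstein neighborhood of $L$ and try to match the outer pieces, whereas the paper never leaves the complement of $C_0$ --- it uses the rationality of $[\omega]$ to write $PD([l\omega])$ as a \emph{positive integral} combination of the classes of the components of $C_0$, takes the associated divisor and the plurisubharmonic function $-\log|s|^2$ of a defining section, and concludes that $U_0=V_k\setminus C_0$ is a finite-type Stein manifold biholomorphic to the affine quadric whose symplectic completion is $(T^*S^2,\omega_{std})$; Hind's Theorem \ref{Hind's theorem} is then applied there directly, and the resulting isotopy is pulled back into a sublevel set by the Liouville flow. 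Unfortunately your route has two gaps that the paper's toolkit does not fill.

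First, the matching of the outer cut pieces requires connecting the two symplectic $(-2)$-spheres $\Sigma$ and $\Sigma'$ (the images of the two cut hypersurfaces) through symplectic spheres in their common homology class, compatibly with $C_0$. You invoke Proposition \ref{symplectic connectedness'} for the configuration $\Sigma\cup C_0$, but that proposition only covers stable spherical symplectic configurations, and Definition \ref{configuration} explicitly requires every component to have square $\geq -1$; a $(-2)$-sphere is excluded, and for good reason --- the automatic transversality inputs (Theorem \ref{auto-tran}, Sikorav's criterion \eqref{nodal}) fail for it. The only place the paper obtains isotopy uniqueness of symplectic $(-2k)$-spheres is Proposition \ref{negative connectedness theorem}, and that rests on the Abreu--McDuff stratification (Theorem \ref{AbM}), which is special to $S^2\times S^2$ and is not available for the outer piece $M$ here. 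So the ``heart of the matter'' step, which you yourself flag as the main obstacle, is not actually established.

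Second, rationality of $[\omega]$ is doing no work where you place it (McDuff's uniqueness of cohomologous symplectic forms on rational manifolds does not need the class to be rational), and it is missing where it is indispensable: your final step applies Seidel's computation of $\pi_0\,Symp_c(T^*S^2,\omega_{std})$, or alternatively contracts $L$ by a Liouville flow into a standard $\overline{T^*_{r'}S^2}$, and both require that the complement $N$ of a neighborhood of $C_0$ be \emph{symplectomorphic} (after completion) to $(T^*S^2,\omega_{std})$, not merely diffeomorphic to the disk bundle. Establishing that symplectic identification is exactly the content of the paper's Stein argument, and it is precisely there that rationality of $[\omega]$ is consumed: it guarantees that some integral multiple of $[\omega]$ is Poincar\'e dual to a positive integral divisor supported on $C_0$, so that $-\log|s|^2$ is an exhausting plurisubharmonic potential for $l\omega$ on the complement. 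Without this step your symplectomorphism $\Phi(L)=L_{\mathrm{std}}$ cannot be upgraded to a Lagrangian isotopy inside $N$, which is what the proposition asserts.
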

\pfone



By Lemma \ref{off} we can assume that the Lagrangian sphere $L$ is in the complement of $C_0$, so the complement of $C_0$ contains at least one  Lagrangian sphere.

We will discuss   the case $k=3$. The cases $k=1,2$ are similar.
Up to scaling,  we can write  $PD([\omega])=aH-E_1-E_2-b_3E_3-b_4E_4$ since
$\omega([L])=0$. Further,
$ a>1+b_i$ since $\omega(H-E_1-E_i)>0$ for $i=3,4$.
Rewrite $$PD([\omega])=(H-E_1-E_2)+(a-1)(H-E_3-E_4)+(a-1-b_3)E_3+(a-1-b_4)E_4.$$
Notice that   $a,b_i\in\QQ^+$ since $[\omega]$ is assumed to rational.
Since all coefficients are rational and positive,
there is  a large
integer $l$, such that $PD([l\omega])$ is represented as an
positive integral combination of $\{H-E_1-E_2, H-E_3-E_4, E_3, E_4\}$,
say, with coefficients $u, v, w, z\in\mathbb{Z}^+$.

If $C_0=C_{H-E_1-E_2}\cup C_{H-E_3-E_4}\cup C_{E_3}\cup C_{E_4}$, consider  the divisor
$F=uC_{H-E_1-E_2}+vC_{H-E_3-E_4}+wC_{E_3}+zC_{E_4}$. There is a holomorphic line
bundle $\LL$ with a  holomorphic section
$s$ whose zero divisor is exactly $F$.
Take an hermitian metric and a
compatible connection on $\LL$ such that the curvature form is just
$l\omega$.
$\phi=-log|s|^2$ defines a  plurisubharmonic function with
$-d(d\phi\circ J_0)=l\omega$ on the complement $U_0$ of the
$C_0$.

Notice that $U_0$ is   the same as the complement $U$ in Proposition 4.2.1 in
\cite{EvansT}, which is shown to be biholomorphic to the affine quadric there.
The rest of the argument is exactly as in the proof of Proposition 4.2.1 in \cite{EvansT},
reducing to  Theorem \ref{Hind's theorem}, the uniqueness in $(T^*S^2, \omega_{std})$.

Consider the  finite type Stein structure $(J_0, \phi/l)$ on  $U_0$.
Define $h:\RR\to \RR$ to be the function $h(x)=e^x-1$ and
$\phi_h=h\circ \phi$. By Lemma 3.1 in \cite{BCi} and  Lemma 6 in \cite{SI}, $(U_0, J_0, \phi_h)$ is a complete Stein manifold of finite type with K\"ahler form $\omega_h=-d(d\phi_h\circ J_0)$.
Suppose a sublevel set $Y=\phi^{-1}[0, k]$ contains all the critical points of $\phi$. View $(Y, \omega)$ as a Liouville  domain, and let   $(\hat Y, \hat \omega)$ be its symplectic completion.
By Lemma 2.1.5 in \cite{EvansT},
$(U_0, \omega_h)$ is symplectomorphic to  $(\hat Y, \hat \omega)$.

Since the  affine quadric $Q$ has a complete finite type Stein structure inherited from $\CC^3$,
it follows from Lemma  2.1.6 in
\cite{EvansT}  that   $(U_0, \omega_h)$ is symplectomorphic to
$(Q, \omega_{can})$.  Combining all the symplectomorphisms, we find that the Liouville manifold
$(\hat Y, \hat \omega)$  is symplectomorphic to $(T^*S^2, \omega_{std})$.

Given  any two
Lagrangian spheres $L_0$, $L_1$ in the complement of $C_0$, they lie in a sublevel set $Y$ of $\phi$ containing all the critical points.
We obtain an isotopy $L_t$ in $(\hat Y, \hat \omega)$  by Hind's Theorem \ref{Hind's theorem}.
Contract the  isotopy $L_t$ into the sublevel set $Y$ using the negative  Liouville flow on $(\hat Y, \hat \omega)$.
The endpoints of the contracted
isotopy are also connected in $Y$ to $L_0$ and $L_1$ respectively by the positive  Liouville flow. Therefore, one gets the desired
Hamiltonian isotopy between $L_0$ and $L_1$ in $Y\subset U_0$.

\pftwo

\pfone[Proof of Theorem \ref{Lagrangian isotopy uniqueness}:] As mentioned in the beginning of this subsection, we could assume that
$M=V_k$ with $k=1, 2, 3$, $\omega$ is a K\"ahler form compatible with $J_0$, and $\xi=E_1-E_2$.

Suppose $L_0$ and $L_1$ are two Lagrangian spheres in the class $\xi$.
By  Lemma \ref{off} they are Hamiltonian isotopic respectively to two Lagrangian spheres, still denoted by $L_0$ and $L_1$,  in the complement $U_0$ of $C_0$.
We will show that $L_0$ and $L_1$ are Lagrangian isotopic in $U_0$, and hence in $(V_k, \omega)$. As argued in Theorem \ref{Hind's theorem}, this implies that $L_0$ and $L_1$ are Hamiltonian isotopic.

Again we will discuss the case $k=3$.
By rescaling the symplectic form, we could still assume the
$\omega$-area of $E_1$ and $E_2$ is rational.
View $(V_3, \omega)$  as a three point blow-up of a monotone $(S^2\times S^2, \tau)$, then as the three disjoint components of $C_0$, $C_{H-E_1-E_2}, C_{E_3}, C_{E_4}$ are all exceptional, corresponding to three ball embeddings $h_{12}, e_3, e_4$
 in $(S^2\times S^2, \tau)$. Let $\tilde L_0$ and $\tilde L_1$ be the corresponding Lagrangians in $(S^2\times S^2, \tau)$.



Via  the correspondence of ball-embeddings and symplectic forms in the blown-up manifolds, one may deform
$\omega$ to $\omega'$ near $C_{H-E_1-E_2}, C_{E_3}, C_{E_4}$
such  that their $\omega'$-areas become rational.  In fact, from the continuity of ball embeddings, such a deformation can be chosen
to correspond to a slightly larger ball-embeddings $h'_{12}$, $e'_3$ and $e'_4$ in $(S^2\times S^2, \tau)$. Further, we may assume that the larger embedded balls are
still disjoint from $\tilde L_0$ and $\tilde L_1$.  And when such a perturbation is chosen small enough, $J_0$ is still
tamed by $\omega'$ so that the  configuration $C_0$ is still symplectic with respect to $\omega'$.

Notice that  $L_0$ and $L_1$ remain Lagrangian in $(V_k,\omega')$.
Notice also that  $[\omega']$ is rational,  so we have   a Lagrangian isotopy
 between $L_0$ and $L_1$ in $(V_3,\omega')$ by Proposition \ref{complement-unique-rational}.
It is important to observe that  such an isotopy  can be chosen to lie  inside
the complement of the $\omega'-$symplectic configuration $C_0$.

In particular, the isotopy does not intersect the spheres $C_{H-E_1-E_2}, C_{E_3}, C_{E_4}$. In turn it
gives rise to an isotopy between $\tilde L_0$ and $\tilde L_1$ in the complement of the images of $h'_{12}$, $e'_3$ and $e'_4$.
Since $h'_{12}$, $e'_3$ and $e'_4$ are extensions of $h_{12}$, $e_3$ and $e_4$, the isotopy between $\tilde L_0$ and $\tilde L_1$
lie in the complement of the images of $h_{12}$, $e_3$ and $e_4$.
Therefore it gives rise to  an isotopy between $L_0$ and $L_1$
in the complement of the spheres $C_{H-E_1-E_2}, C_{E_3}, C_{E_4}$ in $(V_k,\omega)$.

\pftwo


\subsection{Smooth isotopy}

\pfone[Proof of Theorem \ref{smooth isotopy uniqueness}:]
By Proposition \ref{K-Lagrangian sphere class classification}, we again assume that
we are in the binary case $E_1-E_2$. Given two Lagrangian spheres $L_i$,
 following  \cite{EvansT},
 consider the classes $E_j$, $j\geq 3$.
 From Theorem \ref{main}, for each $i$, we can
 find a set of disjoint symplectic spheres in $E_j$, which are also disjoint from $L_i$.
 Applying Proposition \ref{symplectic connectedness'} to these two stable spherical symplectic configurations as above,
 we can assume that $L_i$ are both disjoint from a set of disjoint symplectic spheres $S_i$ in $E_j, j\geq 3$.

Blow down $S_i$ we obtain $(\CC P^2\#2\overline{\CC P^2}, \omega') $ with balls $B_j$ disjoint from $L_i$.
Let $L_t$ be a  Lagrangian isotopy between $L_i$ in  $(\CC P^2\#2\overline{\CC P^2}, \tilde \omega) $ from Theorem \ref{Lagrangian isotopy uniqueness}.
Viewed as a smooth isotopy, we can assume that $L_t$ is transversal to the centers $x_j$  of $B_j$, thus avoiding $x_j$.
Let $B'_j\subset B_j$ be a smaller ball not intersecting $L_t$. Let $\phi$ be a diffeomorphism from  $U'$, the  complement of $\cup B_j'$ to $U$,  the complement of $\cup B_j$,
which is identity near $L_i$.
Then $\phi(L_t)$ is a smooth isotopy between $L_i$ in $U$. Blowing up at $x_j$ by cutting $B_j$, we get back to $(M, \omega)$ and   a smooth isotopy between $L_i$ therein.

\pftwo

\subsection{Some remarks on uniqueness}

We end the paper with some discussions about uniqueness.

\subsubsection{Lagrangian $\RR P^2$}\label{rp2}

The argument  in  \ref{section:alternative approach-cut}, with $(-2)$-spheres replaced by $(-4)$-spheres,
can be used to  prove that any two Lagrangian $\RR P^2$ in $(\CC P^2, \omega_{std})$ are symplectomorphic.  From Gromov's connectedness of $Symp(\CC P^2,
\omega_{std})$ in \cite{Gromov}, we then obtain a new proof of  the following result of Hind (\cite{Hind2}).

\thmone[Hind] Any two Lagrangian $\RR P^2$ in $\CC P^2$ are Hamiltonian
isotopic to each other. \thmtwo

\subsubsection{Uniqueness up to symplectomorphisms}\label{us}


   Conjecture
\ref{k-blowups} states that,
for any two homologous Lagrangian spheres
$L_1$ and $L_2$ in a symplectic rational manifold $(M,\omega)$, there
exists  $\phi\in Symp_h(M,\omega)$ such that $\phi(L_1)=L_2$.
It implies
the disconnectedness of homologically trivial symplectormophism
groups in the cases when there are non-isotopic Lagrangian spheres.

We outline a possible  approach to Conjecture \ref{k-blowups}.
One easily reduces the problem to the binary case as in the proof of
Theorem \ref{Lagrangian sphere class classification}.
Without
loss of generality, let $[L_i]=E_1-E_2$.

For each pair $(M,
L_i)$,  by Theorem \ref{main}, away from $L_i$, there is a set of disjoint $(-1)$ symplectic spheres $C^l_i, l=3, ..., k+1,$   with $[C_i^l]=E_l$ for $l=3, ..., k,$ and  $[C^{k+1}_i]=
H-E_1-E_2$.
Blowing down the $C_l$ yields two
$(k+1)$-tuples of $(\tilde{M_i}, \tilde L_i, B_i^l)$, $i=1,2$, $3\leq l\leq
k+1$.   Here $\tilde{M_i}$ is a symplectic $S^2\times S^2$, $\tilde L_i$ a
Lagrangian sphere, and $B_i^l$ a symplectic  ball
corresponding to  $C_i^l$.

By \cite{LM2}  there is a symplectomorphism
$\Psi:\tilde{M_1}\rightarrow\tilde{M_2}$.
From  Theorem \ref{Hind's}, there is a
symplectomorphism sending $\Psi(\tilde L_1)$ to $\tilde L_2$.  Composing these
two symplectomorphisms one obtains a symplectomorphism between the
pairs $(\tilde{M_i}, \tilde L_i)$, which we still denote as $\Psi$. The conjectured
connectedness of relative symplectic ball embedding in Remark
\ref{New Connectedness Theorem} implies that the $k-2$ balls $\Psi(B_1^l)$ can be
 further displaced by an $\tilde L_2$-preserving Hamiltonian isotopy to
the balls $B_2^l$.
This gives a symplectomorphism between the $(k+1)$-tuples
$(\tilde{M_i}, \tilde L_i, B_i^l)$, which in turn descends to a
symplectomorphism  between the pairs $(M, L_i)$.

\subsubsection{Lagrangian $T^2$}

\textsc{1. School of Mathematical Sciences, University of Minnesota,
Minneapolis, MN55455, U.S.A.}

\textsc{2. School of Mathematical Sciences, University of Minnesota,
Minneapolis, MN55455, U.S.A.}
\end{document}